\documentclass{article}

\usepackage{fullpage}
\usepackage{amsmath}
\usepackage{amssymb,amsthm}
\usepackage{hyperref}
\hypersetup{
    colorlinks=true,
    linkcolor=blue,
    urlcolor=blue,
}
\usepackage{graphicx}
\usepackage{enumitem}

\DeclareMathOperator*{\argmin}{arg\,min}
\newcommand{\RR}{\mathbb{R}}

\DeclareMathOperator{\dist}{dist}
\DeclareMathOperator{\Fix}{Fix}

\theoremstyle{plain}
\newtheorem{theorem}{Theorem}[section]
\newtheorem{proposition}[theorem]{Proposition}
\newtheorem{lemma}[theorem]{Lemma}
\newtheorem{corollary}[theorem]{Corollary}

\theoremstyle{definition}
\newtheorem{definition}[theorem]{Definition}
\newtheorem{assumption}[theorem]{Assumption}

\theoremstyle{remark}
\newtheorem{remark}[theorem]{Remark}

\title{New operator designs for Halpern iterations with explicit rates under H\"older error bounds}

\begin{document}
\maketitle

\begin{center}
\begin{tabular}{ccc}

\begin{tabular}{c}
Pablo Barros\\
School of Applied Mathematics, FGV\\
Praia de Botafogo, Rio de Janeiro, Brazil\\
{\tt pabloacbarros@gmail.com}
\end{tabular}
&
&
\begin{tabular}{c}
Vincent Guigues\\
School of Applied Mathematics, FGV\\
Praia de Botafogo, Rio de Janeiro, Brazil\\
{\tt vincent.guigues@fgv.br}
\end{tabular}\\
&&\\
\begin{tabular}{c}
Roger Behling\\
Department of Mathematics, UFSC\\
Blumenau, SC, Brazil\\
{\tt rogerbehling@gmail.com}\\
\end{tabular}
&
&
\begin{tabular}{c}
Luiz-Rafael Santos\\
Department of Mathematics, UFSC\\
Blumenau, SC, Brazil\\
{\tt l.r.santos@ufsc.br}
\end{tabular}
\end{tabular}
\end{center}

\begin{abstract}
We investigate the asymptotic behavior of Halpern-type iterations applied to quasi-nonexpansive operators arising in best approximation problems over the intersection of finitely many closed convex sets in~\(\mathbb{R}^n\).  
Assuming a local decrease condition for the underlying operator and standard requirements on the stepsizes \((\alpha_k) \subset (0,1]\), we first prove strong convergence of the Halpern sequence \(x_{k+1} = \alpha_k x_0 + (1-\alpha_k) T x_k\) to the best approximation point \(x^\star\) in the intersection set, that is, the metric projection of \(x_0\) onto that set.  
Under the additional assumption that the intersection satisfies a H\"older-type error bound with exponent \(\gamma \in (0,1]\), we then derive explicit convergence rates for both feasibility and norm error: the distance from \(x_k\) to the intersection set decays like \(\mathcal O(\alpha_k^{\gamma/(2-\gamma)})\), while the norm error \(\|x_k - x^\star\|\) decays like \(\mathcal O(\alpha_k^{\gamma/(4-2\gamma)})\).  
These results apply to most projection-type operators used in convex feasibility problems (including MAP, CRM/SCCRM, Cimmino and 3PM/A3PM) and extend classical convergence analyses of the Halpern-type iterations by providing explicit, geometry-dependent rates governed by H\"older-type error bounds. Our numerical experiments show that Halpern-type iterations combined with some of these projection-type operators are quicker than Dykstra's algorithm to find the projection of a point in an intersection of ellipsoids or in an intersection of polyhedra.
\end{abstract}

\maketitle

\section{Introduction}

Finding the projection of a point onto the intersection of convex sets, the Best Approximation Problem (BAP), is a cornerstone task in applied mathematics, underpinning applications ranging from quantitative finance \cite{higham2002computing} to medical physics \cite{sidky2008image}.  

Throughout, $\RR^n$ denotes the $n$-dimensional Euclidean space equipped with an inner product $\langle \cdot,\cdot\rangle$ and associated norm \(\| \cdot \|.\) Given closed convex sets \(U_1,\ldots,U_m \subset \RR^n\) with nonempty intersection
\[
S := \bigcap_{i=1}^m U_i \neq \varnothing,
\]
and an anchor point \(x_0 \in \RR^n\), the associated best-approximation problem is
\begin{equation}\label{BAP}
\min_{x \in S} \|x - x_0\|. \tag{BAP}
\end{equation}
The unique optimizer of~\eqref{BAP} is the metric projection
\[
P_S(x_0) := \argmin_{x \in S} \|x - x_0\|.
\]

Beyond its intrinsic geometric appeal, the BAP is tightly interwoven with modern statistics and machine learning.
Already in classical order-restricted inference, computing restricted least--squares estimators amounts precisely to projecting an unconstrained estimate onto an intersection of affine and shape--constraint sets, and Dykstra's algorithm was originally introduced in this context as a practical BAP solver in Hilbert space \cite{dykstra1983algorithm}. The subsequent Boyle--Dykstra framework formalized projection methods for intersections of closed convex sets in a way that unifies a wide array of constrained regression and order-restricted estimation procedures \cite{boyle1986method}. More recent developments in constrained and shape-constrained estimation --- such as nonconvex isotonic regression based on submodular optimization \cite{bach2017efficient} --- can likewise be viewed as instances of projecting onto intricate intersections of convex (or ``nearly convex'') constraint sets.

In machine learning, projection onto intersections of convex sets arises routinely as a core subroutine in large-scale optimization. Examples include sparse group feature selection, where structured sparsity constraints and group penalties are enforced via iterative projection or proximal steps \cite{xiang2020efficient}, and universal latent-space model fitting for large networks with edge covariates, where projected gradient methods onto low-rank and simplex-type constraints underpin scalable estimation algorithms \cite{ma2020universal}. From an algorithmic viewpoint, Dykstra-type projection schemes now sit at the crossroads of several mainstream optimization paradigms: Tibshirani shows that Dykstra's algorithm for the BAP is exactly equivalent, on the dual, to coordinate descent for a broad class of regularized regression problems, and closely related to ADMM \cite{tibshirani2017dykstras}. Thus, advances in BAP algorithms immediately translate into new insights and performance guarantees for widely used procedures in statistics and machine learning.

Against this backdrop, obtaining sharp, geometry-dependent convergence guarantees for anchored projection schemes solving the BAP --- such as the Halpern iteration studied here --- is not merely of abstract interest. It feeds directly into the analysis of constrained regression, structured estimation, and large-scale learning algorithms whose inner loops are, at their core, best-approximation problems over intersections of convex sets.

Classical projection algorithms for feasibility, such as the method of alternating projections, are designed to produce {some} point in the intersection \(S\), but they do not in general converge to the specific best-approximation point \(P_S(x_0)\).  
To enforce convergence to \(P_S(x_0)\), Halpern \cite{Halpern1967} introduced the anchored fixed-point iteration
\begin{equation}\label{eq:halpern}
x_{k+1} = \alpha_k x_0 + (1-\alpha_k)\,T x_k,
\end{equation}
where \(T:\RR^n\to\RR^n\) is an operator with fixed-point set \(\Fix(T)=S\), the stepsizes \((\alpha_k)_{k\ge1}\subset(0,1]\) satisfy \(\alpha_k \to 0\) and \(\sum_k \alpha_k = \infty\), and \(x_0\) plays the role of an anchor.  

The Halpern iteration is by now a fundamental scheme in fixed-point theory and in the analysis of nonexpansive mappings; see the monograph of Bauschke and Combettes~\cite{Bauschke2017} and the classical convergence results in~\cite{wittmann1992approximation,khatibzadehRanjbar2015halpern}.  
It has been extensively studied in connection with convex feasibility, monotone inclusions, and best-approximation problems; see, e.g.,~\cite{BauschkeBorwein1993,BauschkeBorwein1996,Combettes1996,bauschkeKoch2015projection, khatibzadeh2020unification, SAEJUNG20103431}.  
In these works, the driving operator \(T\) is typically nonexpansive (or strongly quasi-nonexpansive), and the main focus is on strong convergence of \((x_k)\) to a point in \(\Fix(T)\), often identified with the metric projection \(P_S(x_0)\).

In the present paper, we are interested in the regime where \(T\) is generated by projection-type methods for convex feasibility and best approximation.  
Representative examples include von Neumann's method of alternating projections (MAP)~\cite{vonneumann1950functional, halperin1962product, BauschkeBorwein1993,BauschkeBorwein1996}, circumcentered reflection methods (CRM and their variants)~\cite{crm1,crmprod,BauschkeOuyangWang2018,BehlingBelloCruzIusemSantos2024,succccrm,barros2025parallelizing}, and parallel, projection-based schemes such as Cimmino's method \cite{BauschkeBorwein1996, Cimmino1938} and the Parallel Polyhedral Projection Method (3PM) along with its inexact variant A3PM \cite{Barros3pm}.  
For such operators, strong convergence of the Halpern iteration to an element of \(\Fix(T)\) in the nonexpansive setting is well understood.  
By contrast, explicit convergence rates for the {norm error} \(\|x_k - P_S(x_0)\|\) remain largely unavailable, especially when \(T\) is produced by more sophisticated projection mechanisms (e.g., circumcentered reflections or parallel polyhedral projections) that fail to be globally nonexpansive.

Our approach is explicitly geometric.  
We analyze the Halpern iteration~\eqref{eq:halpern} driven by a broad class of projection-based operators with \(\Fix(T)=S\), and we derive quantitative convergence rates for the best-approximation error \(\|x_k - P_S(x_0)\|\).  
These rates are governed by a H\"older-type error bound for the underlying intersection, thereby revealing a precise link between the geometric regularity of the sets \(\{U_i\}\) and the quantitative behavior of the anchored iteration.

In recent years, Halpern-type (anchored) iterations have also gained prominence in optimization, where they appear as structurally close to optimal fixed-point and accelerated first-order schemes.  
Tran-Dinh~\cite{TranDinh2023HalpernNesterov} established an equivalence between Halpern's method and Nesterov's acceleration under cocoercivity assumptions, framing the Halpern update as a unifying template for regularized first-order methods.  
This perspective underscores the importance of quantitative convergence guarantees for Halpern-type iterations in structured settings where the geometry of the solution set plays a central role.  
Our work leverages this viewpoint by showing how H\"older-type error bounds on the feasibility problem govern the convergence behavior of projection-driven Halpern iterations, even when the underlying operator is only locally decreasing and not globally nonexpansive.

For clarity of exposition, we present the analysis for the Halpern iteration generated by a single operator \(T\).  
However, the proofs extend verbatim to the more general update
\[
x_{k+1} = \alpha_k x_0 + (1-\alpha_k) T_k x_k,
\]
provided that each \(T_k\) satisfies Assumption~\ref{ass:T} with the same constant \(c_0>0\).  
This uniformity ensures that the geometric decrease of the distance to \(S\) (Lemma~\ref{lem:contraction}) and all subsequent recursive estimates remain valid.  
To keep the presentation streamlined, we therefore restrict attention to the single-operator case.

\noindent\paragraph{Related work.}
The Halpern iteration was introduced in the late 1960s as a tool for approximating fixed points of nonexpansive mappings in Hilbert spaces~\cite{Halpern1967} and has since become a standard scheme in fixed-point theory and monotone operator splitting (see also~\cite{Bauschke2017} and references therein).  
Classical results establish strong convergence of the Halpern sequence to the metric projection onto the fixed-point set under mild assumptions on the stepsizes and on the underlying space; see, e.g., Wittmann~\cite{wittmann1992approximation}, subsequent work by Reich and others, and strongly quasi-nonexpansive (SQNE)-type extensions such as~\cite{khatibzadehRanjbar2015halpern}.  

More recently, several authors have broadened the theoretical understanding of Halpern-type schemes.  
Yu and Wang~\cite{YuWang2023Halpern} generalized Halpern's iteration by relaxing control conditions and derived explicit convergence guarantees for split feasibility problems.  
Lin and Xu~\cite{LinXu2022Holder} developed convergence-rate results for generalized averaged nonexpansive operators under H\"olderian residual continuity, showing that convergence speed is dictated by the deviation from firm nonexpansiveness and by the underlying error-bound exponent.  
These works strongly support the paradigm that geometric error-bound structure drives algorithmic performance.  
Our analysis follows and sharpens this trajectory by quantifying feasibility and norm-error rates for Halpern-type iterations driven by quasi-nonexpansive projection schemes under minimal local assumptions.

Connections between Halpern-type iterations and accelerated optimization methods have also been studied intensively.  
Besides the equivalence result of Tran-Dinh~\cite{TranDinh2023HalpernNesterov}, Lieder~\cite{lieder2021halpern} showed that, for a single nonexpansive operator in a Hilbert space and a suitable choice of stepsizes, the Halpern iteration achieves the sharp residual rate \(\mathcal O(1/k)\) for \(\|x_k - T x_k\|\); this rate is in fact optimal in general Hilbert spaces.  
However, the emphasis in this line of work is predominantly on {residual}-type quantities, i.e., bounds on \(\|x_k - T x_k\|\) or, in optimization terminology, on gradients and operator residuals.  
Quantitative statements on the {norm error} \(\|x_k - x^\star\|\) or on the distance to the fixed-point set \(\mathrm{dist}(x_k, \Fix(T))\) are much less developed, especially beyond the globally nonexpansive case.

In parallel, there is now a mature literature relating convergence rates of projection algorithms to geometric regularity via error bounds and regularity properties.  
For alternating projections and related methods, one finds, for example, the analyses in~\cite{BauschkeBorwein1996,bauschkeKoch2015projection,borweinLiYao2014cyclic,drusvyatskiyLiWolkowicz2017illposedSDP}, where linear or H\"older-type regularity assumptions on the intersection yield sublinear rates with exponents explicitly determined by the regularity parameters.  
These results tie together metric (sub)regularity, Kurdyka--{\L}ojasiewicz inequalities, and the behavior of projection algorithms in both convex and nonconvex settings.

The present work lies at the intersection of these two strands.  
We study the Halpern iteration driven by projection-type operators and seek explicit rates for the distance to the target set and for the norm error to the best-approximation point, expressed in terms of a H\"older error bound for the underlying feasibility problem.  
To the best of our knowledge, such quantitative norm-error estimates for Halpern-type schemes in a setting where the driving operator arises from projection algorithms and satisfies only a local decrease property have not been available before.

This work extends geometric error-bound ideas to the Halpern iteration and to a broad class of quasi-nonexpansive operators, yielding what appears to be the first explicit rates for the norm error \(\|x_k - P_S(x_0)\|\) under a H\"older-type error bound.  
Our main contributions can be summarized as follows:
\begin{itemize}
\item \textbf{Halpern iteration beyond global nonexpansiveness.}  
We develop a convergence framework for Halpern-type iterations driven by projection-based operators that are not globally nonexpansive.  
This covers modular schemes such as CRM, A3PM, and MAP, many of which violate classical assumptions like firm nonexpansiveness or global Lipschitz continuity.  
Our analysis requires only a localized decrease condition and a geometric regularity property, thereby generalizing fixed-point frameworks from~\cite{YuWang2023Halpern,LinXu2022Holder} to a substantially richer class of projection operators.

\item \textbf{Explicit norm-error rates under H\"older error bounds.}  
We provide the first explicit convergence rates for the norm error of the Halpern sequence under a H\"older-type error bound on the fixed-point set.  
While prior work (e.g.,~\cite{TranDinh2023HalpernNesterov}) established residual or operator-fixed-point proximity rates, our analysis connects the iteration directly to the best-approximation target \(P_S(x_0)\).  
In particular, under an error bound with exponent \(\gamma\), we obtain a decay rate of order \(\mathcal O(\alpha_k^{\gamma/(4-2\gamma)})\) for \(\|x_k - P_S(x_0)\|\), via a careful study of feasibility and norm-error sequences.
\end{itemize}

Overall, the paper connects the asymptotic regularity theory of Halpern's method with the geometric error-bound analysis of projection algorithms~\cite{BauschkeBorwein1996,bauschkeKoch2015projection,borweinLiYao2014cyclic,drusvyatskiyLiWolkowicz2017illposedSDP}, and provides quantitative norm-error guarantees for anchored projection schemes solving the best-approximation problem~\eqref{BAP}.

The outline of the paper is as follows. Section \ref{sec:prel}
contains important preliminary results for our analysis.
The convergence and complexity analysis of Halpern iterations for operators $T$
satisfying the local decrease Assumption \ref{ass:T} below is discussed
in Section \ref{sec:mainresults}.
In Section~\ref{sec:examples}, we show that this local decrease assumption is
satisfied for six operators: the method of alternating projections
(MAP, or cyclic projections), Cimmino's method, 3PM, A3PM, the SCCRM operator
from~\cite{succccrm}, and the CRM operator.
Finally, the numerical experiments in Section~\ref{sec:numexp} show that
Halpern iterations combined with these operators can find the closest point to
an intersection of ellipsoids or polyhedra faster than Dykstra's algorithm.

\section{Preliminaries}\label{sec:prel}

Throughout, we denote
\begin{equation}\label{def:distnotation}
    \mathfrak d(z) := \mathrm{dist}(z, S),
    \qquad
    \delta(z) := \max_{i=1,\ldots,m} \mathrm{dist}(z, U_i),
\end{equation}
the distance to the intersection and the maximal constraint violation, respectively.  
These quantities play complementary roles in the analysis of projection algorithms.  
The distance \(\mathfrak d(z)\) measures how far a point is from feasibility, while \(\delta(z)\) captures the largest single-constraint violation and is often more directly controlled by one projection step.  
In the terminology of variational analysis, relations between \(\mathfrak d\) and \(\delta\) are encoded by metric (sub)regularity or linear/H\"older regularity of the collection \(\{U_i\}\); see, for instance, the classical work of Bauschke and Borwein on regularity and alternating projections~\cite{BauschkeBorwein1993,BauschkeBorwein1996}, and the more recent error-bound analyses of Drusvyatskiy, Lewis and collaborators~\cite{borweinLiYao2014cyclic,drusvyatskiyLiWolkowicz2017illposedSDP}.  
Our standing H\"older error-bound assumption precisely quantifies how the violation \(\delta(z)\) dominates the distance \(\mathfrak d(z)\), and it will be the main geometric ingredient underlying all rate estimates.

We begin with a simple lemma showing that, under mild boundedness assumptions, the vanishing of the maximal violation $\delta(z_k)$ forces the distance $\mathfrak d(z_k)$ to vanish as well. This continuity link between $\delta$ and $\mathfrak d$ is standard in the error-bound literature (see, e.g., \cite{BauschkeBorwein1996,borweinLiYao2014cyclic}). We include the short proof for completeness, as it provides the critical bridge that allows us to deduce the convergence of the distance sequence from the decay of constraint violations throughout our subsequent analysis.  

\begin{lemma}\label{lem:delta-to-d}
Let $U_1,\ldots,U_m\subset\mathbb R^n$ be nonempty, closed sets, and let
\[
S:=\bigcap_{i=1}^m U_i \neq \varnothing,\qquad
\mathfrak d(z):=\mathrm{dist}(z,S),\qquad
\delta(z):=\max_{1\le i\le m}\mathrm{dist}(z,U_i).
\]
If $(z_k)\subset\mathbb R^n$ is bounded and $\delta(z_k)\to0$, then $\mathfrak d(z_k)\to0$.
\end{lemma}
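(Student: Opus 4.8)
The plan is to argue by contradiction, using compactness of bounded sequences in $\mathbb{R}^n$ together with two elementary facts about distance functions. For any nonempty set $A\subset\mathbb{R}^n$, the map $z\mapsto\mathrm{dist}(z,A)$ is $1$-Lipschitz, hence continuous; and if $A$ is moreover closed, then $\mathrm{dist}(z,A)=0$ if and only if $z\in A$. The first fact lets me pass limits through each $\mathrm{dist}(\cdot,U_i)$ and through $\mathfrak d=\mathrm{dist}(\cdot,S)$, and the second converts the vanishing of $\mathrm{dist}(\cdot,U_i)$ at a limit point into membership in the intersection $S$. Since $S\neq\varnothing$, the quantity $\mathfrak d$ is finite-valued, so the statement is well posed.

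Concretely, suppose the conclusion fails, i.e.\ $\mathfrak d(z_k)\not\to0$. Then there are $\varepsilon>0$ and a subsequence $(z_{k_j})$ with $\mathfrak d(z_{k_j})\ge\varepsilon$ for all $j$. Because $(z_k)$ is bounded, so is $(z_{k_j})$, and by Bolzano--Weierstrass I may pass to a further subsequence, relabeled $(z_{k_j})$, converging to some $z^\star\in\mathbb{R}^n$. Along this subsequence $\delta(z_{k_j})\to0$, so for each fixed index $i$ one has $\mathrm{dist}(z_{k_j},U_i)\le\delta(z_{k_j})\to0$; continuity of $\mathrm{dist}(\cdot,U_i)$ then gives $\mathrm{dist}(z^\star,U_i)=0$, and closedness of $U_i$ yields $z^\star\in U_i$. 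As this holds for every $i$, we get $z^\star\in S$, hence $\mathfrak d(z^\star)=0$. Finally, continuity of $\mathfrak d$ forces $\mathfrak d(z_{k_j})\to\mathfrak d(z^\star)=0$, contradicting $\mathfrak d(z_{k_j})\ge\varepsilon$. Therefore $\mathfrak d(z_k)\to0$.

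There is no genuinely hard step here; the value of the argument lies in tracking where each hypothesis is used. Closedness of the $U_i$ is what upgrades ``zero distance'' to ``membership''; nonemptiness of $S$ is needed so that $\mathfrak d$ is finite and the claim is meaningful; and boundedness of $(z_k)$ is exactly what permits the extraction of a convergent subsequence. The last is not merely technical: without it the iterates could escape to infinity while their constraint violations vanish, and the conclusion would be false. I would also note deliberately that the statement is purely \emph{qualitative}, carrying no rate: no modulus quantifying how fast $\mathfrak d$ decays in terms of $\delta$ is assumed, since that is precisely the role reserved for the H\"older error bound introduced later. For this reason the compactness/contradiction route is the natural one, and attempting a direct quantitative bound $\mathfrak d(z)\le\omega(\delta(z))$ would implicitly smuggle in a regularity assumption that is intentionally absent at this stage.
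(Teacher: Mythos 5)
Your proof is correct and follows essentially the same route as the paper's: extract a convergent subsequence via boundedness, use continuity of $\mathrm{dist}(\cdot,U_i)$ and closedness of the $U_i$ to place the limit point in $S$, then conclude by continuity of $\mathfrak d$. The only cosmetic difference is that you frame it as a contradiction with a subsequence bounded away from zero, whereas the paper works with a subsequence realizing $\limsup \mathfrak d(z_k)$; the mathematical content is identical.
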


\begin{proof}
Let $z_{n_k}$ be a subsequence with \[ \mathfrak d(z_{n_k})\to \limsup \, \mathfrak d(z_k). \] By boundedness, pick a cluster point $\tilde z$ of $(z_{n_k})$. Taking a further subsequence if needed, we can assume that $z_{n_k} \to \tilde z.$ Since each $U_i$ is closed, $\mathrm{dist}( \, \cdot \,,U_i)$ is continuous; hence so is $\delta$, and $\delta(z_{n_k})\to0$ gives $\delta(\tilde z)=0$. Thus $\tilde z\in U_i$ for all $i$, i.e., $\tilde z\in S$, so $\mathfrak d(\tilde z)=0$. Since $S$ is closed, $\mathfrak d$ is continuous, from which we deduce \[ \limsup \, \mathfrak d(z_k)=\lim \, \mathfrak d(z_{n_k})=\mathfrak d(\tilde z)=0. \]
This clearly implies $\mathfrak d(z_k)\to0$.
\end{proof}

\noindent
Throughout the paper we repeatedly use the fact that projections onto closed convex
sets satisfy a form of the Pythagorean inequality.  
For convenience we record it here.

\begin{proposition}[Pythagorean inequality for projections]\label{prop:pythag}
Let \(X\subset\RR^n\) be a closed convex set and let \(z\in\RR^n\).
Then for the projection \(P_X(z)\), one has
\begin{equation}\label{eq:pythag-proj}
\|z - x\|^2
\;\ge\;
\|z - P_X(z)\|^2 + \|P_X(z) - x\|^2,
\qquad \forall\, x\in X.
\end{equation}
\end{proposition}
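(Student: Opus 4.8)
The plan is to reduce everything to the variational (obtuse-angle) characterization of the metric projection onto a closed convex set, after which the claimed inequality follows from a single algebraic expansion. Write $p := P_X(z)$; since $X$ is closed and convex and we work in $\RR^n$, the projection $p$ exists and is unique. The key fact I would establish first is the variational inequality
\begin{equation}\label{eq:varineq}
\ascal{z - p}{x - p} \le 0 \qquad \forall\, x \in X.
\end{equation}

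To obtain \eqref{eq:varineq} I would exploit the convexity of $X$ directly. Fix $x \in X$. For $t \in [0,1]$ the point $p + t(x - p) = (1-t)p + tx$ lies in $X$, so by definition of $p$ as the minimizer of $\|z - \cdot\|$ over $X$ the scalar function $\varphi(t) := \|z - p - t(x - p)\|^2$ attains its minimum over $[0,1]$ at $t = 0$. Expanding gives $\varphi(t) = \|z - p\|^2 - 2t\,\ascal{z - p}{x - p} + t^2\|x - p\|^2$, whence $\varphi'(0) = -2\,\ascal{z - p}{x - p}$; minimality at the left endpoint forces $\varphi'(0) \ge 0$, which is exactly \eqref{eq:varineq}.

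With \eqref{eq:varineq} in hand the remaining step is purely algebraic: decomposing $z - x = (z - p) + (p - x)$ and expanding the square yields
\begin{equation}\label{eq:pythag-expand}
\|z - x\|^2 = \|z - p\|^2 + 2\,\ascal{z - p}{p - x} + \|p - x\|^2.
\end{equation}
Since $\ascal{z - p}{p - x} = -\ascal{z - p}{x - p} \ge 0$ by \eqref{eq:varineq}, the cross term in \eqref{eq:pythag-expand} is nonnegative and may be dropped, giving $\|z - x\|^2 \ge \|z - p\|^2 + \|p - x\|^2$, which is precisely~\eqref{eq:pythag-proj}.

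I do not anticipate a genuine obstacle here, as the result is classical. The only point requiring care is the justification of the variational inequality \eqref{eq:varineq}, for which the convexity of $X$ --- ensuring that the segment $[p,x]$ remains feasible --- is essential; without it the one-sided derivative argument collapses. Existence and uniqueness of $p$ are automatic from closedness and convexity in finite dimensions, so no additional hypotheses are needed.
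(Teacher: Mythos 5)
Your proposal is correct. Note, however, that the paper does not actually prove this proposition: its ``proof'' is a one-line citation to \cite[Lemma 2.2.8]{nesterov2018lectures}, deferring entirely to the literature. What you have written is the standard self-contained argument that such references contain: first the variational (obtuse-angle) characterization $\ascal{z - P_X(z)}{x - P_X(z)} \le 0$ for all $x \in X$, obtained by the one-sided derivative of $t \mapsto \|z - P_X(z) - t(x - P_X(z))\|^2$ at $t=0$ (where convexity of $X$ is exactly what keeps the segment feasible), and then the expansion of $\|z - x\|^2$ with the nonnegative cross term dropped. Both steps are carried out correctly, and your remark that convexity is essential for the variational inequality is on point. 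So your proof buys self-containedness at the cost of half a page, whereas the paper buys brevity at the cost of an external dependency; mathematically there is nothing to choose between them.
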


\begin{proof}
See, e.g., \cite[Lemma 2.2.8]{nesterov2018lectures}.
\end{proof}

\noindent
The next assumption encodes a uniform geometric decrease of the distance to~$S$ in terms of the violation~$\delta(x)$.
It resembles firm quasi-nonexpansiveness and is satisfied by most projection-type operators used in convex feasibility.

\begin{assumption}\label{ass:T}
The operator \(T:\RR^n\to\RR^n\) satisfies the local decrease property
\begin{equation}\label{eq:assT}
\|Tx - s\|^2
\;\le\;
\|x - s\|^2
\;-\;
c_0\,\delta(x)^2,
\qquad
\forall\,s\in S,\ \forall\,x\in\RR^n,
\end{equation}
for some constant \(c_0>0\).
\end{assumption}

\begin{remark}\label{rem:assT-iterates}
In the analysis below, the decrease estimate \eqref{eq:assT} is only invoked at
points that actually occur in the algorithmic sequence, i.e., with \(x=x_k\).
Therefore, it is enough to verify \eqref{eq:assT} for those \(x\) that can be
generated by the method from the chosen initialization. For simplicity, we keep
the statement of Assumption~\ref{ass:T} in the form \(\forall\,x\in\RR^n\).
\end{remark}

\noindent
Assumption~\ref{ass:T} expresses a uniform descent of the distance to \(S\) in terms of the maximal constraint violation~\(\delta(x)\),
and will serve as a key tool in the subsequent analysis.
In Section~\ref{sec:examples}, we verify it for the projection-based operators considered in this paper, including the method of alternating projections (MAP)~\cite{BauschkeBorwein1993,BauschkeBorwein1996},
the centralized circumcentered reflection method (cCRM) and its variants~\cite{crm1,crmprod,BehlingBelloCruzIusemSantos2024,succccrm,barros2025parallelizing},
and the Parallel Polyhedral Projection Method (3PM) and its inexact form A3PM (see \cite{Barros3pm}).
Several of these operators --- most notably cCRM and 3PM/A3PM --- are neither nonexpansive nor globally Lipschitz, and therefore fall outside the scope of classical Halpern convergence results in the nonexpansive or strongly quasi-nonexpansive (SQNE) setting (e.g.,~\cite{wittmann1992approximation,khatibzadehRanjbar2015halpern}).

Assumption~\ref{ass:T} can be viewed as a localized, set-valued variant of strong quasi-nonexpansiveness: when \(S=\Fix(T)\) and \(\delta(x)\) is replaced by \(\mathrm{dist}(x,\Fix(T))\), inequalities of the form~\eqref{eq:assT} reduce to the standard SQNE condition studied in fixed-point theory~\cite{khatibzadehRanjbar2015halpern}.  
Our approach, however, relies only on the decrease estimate~\eqref{eq:assT} combined with the anchored Halpern structure and does {not} require demiclosedness of \(I-T\) at the origin --- a hypothesis typically invoked to obtain strong convergence of Halpern-type methods in the SQNE framework (see, e.g., the demiclosedness-based arguments in~\cite{wittmann1992approximation,khatibzadehRanjbar2015halpern}).

In this way, Assumption~\ref{ass:T} cleanly separates the geometric contribution of the sets \(\{U_i\}\) (through the error bound) from the algorithmic contribution of the operator \(T\), while covering projection mappings that lie entirely outside the classical nonexpansive/SQNE setting.

\begin{definition}[Fixed-point set]
For an operator \(T:\RR^n\to\RR^n\), we define by
\[
\Fix(T):=\{x\in\RR^n:\;Tx=x\}
\]
its fixed-point set.
\end{definition}

\noindent
Assumption~\ref{ass:T} directly implies that the fixed-point set of~$T$ coincides with the feasible region~$S$,
ensuring that the limit points of the iteration are feasible.

\begin{lemma}[Fixed points of $T$]\label{lem:FixTS}
Under Assumption~\ref{ass:T} we have \(\mathrm{Fix}(T)=S\).
\end{lemma}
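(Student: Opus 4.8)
The plan is to prove the two set inclusions $S \subseteq \Fix(T)$ and $\Fix(T) \subseteq S$ separately, using Assumption~\ref{ass:T} as the sole engine.

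For the inclusion $S \subseteq \Fix(T)$, I would take an arbitrary $s \in S$ and apply the decrease estimate \eqref{eq:assT} with the particular choice $x = s$. Since $s$ is itself feasible, we have $\delta(s) = 0$ (because $s \in U_i$ for every $i$, so each $\mathrm{dist}(s, U_i) = 0$). Plugging $x = s$ into \eqref{eq:assT} with the same $s \in S$ on the right-hand side yields $\|Ts - s\|^2 \le \|s - s\|^2 - c_0\,\delta(s)^2 = 0$, whence $Ts = s$, i.e.\ $s \in \Fix(T)$.

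For the reverse inclusion $\Fix(T) \subseteq S$, I would take $x \in \Fix(T)$, so $Tx = x$. The idea is to feed this into \eqref{eq:assT} and exploit $c_0 > 0$ to force $\delta(x) = 0$. Fix any $s \in S$ (the set is nonempty by hypothesis). Then \eqref{eq:assT} gives
\[
\|x - s\|^2 = \|Tx - s\|^2 \le \|x - s\|^2 - c_0\,\delta(x)^2,
\]
which rearranges to $c_0\,\delta(x)^2 \le 0$. Since $c_0 > 0$, this forces $\delta(x) = 0$, meaning $\mathrm{dist}(x, U_i) = 0$ for all $i$; as each $U_i$ is closed, $x \in U_i$ for every $i$, hence $x \in S$.

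This proof is essentially immediate, so I do not anticipate a genuine obstacle; the only thing requiring care is making sure the same point $s \in S$ is used consistently on both sides of \eqref{eq:assT} in each direction, and invoking the nonemptiness of $S$ to license the choice of $s$ in the second inclusion. The closedness of the $U_i$ is what converts $\mathrm{dist}(x,U_i)=0$ into actual membership $x \in U_i$.
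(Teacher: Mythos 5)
Your proposal is correct and follows exactly the paper's own argument: both inclusions are obtained from Assumption~\ref{ass:T} in the same way (setting $x=s$ with $\delta(s)=0$ for $S\subseteq\Fix(T)$, and rearranging to force $c_0\,\delta(x)^2\le 0$ for the reverse inclusion). Your extra remark that closedness of the $U_i$ turns $\mathrm{dist}(x,U_i)=0$ into membership just makes explicit a detail the paper leaves implicit.
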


\begin{proof}
We proceed in two steps.
\begin{enumerate}[label=(\roman*)]
    \item \(S\subseteq\mathrm{Fix}(T)\).
If \(x\in S\), then \(\delta(x)=0\).
Taking \(s=x\) in~\eqref{eq:assT} yields
\(\|Tx-x\|^2\le0\), hence \(Tx=x\).

\item \(\mathrm{Fix}(T)\subseteq S\).
If \(Tx=x\), then for any \(s\in S\),
\[
\|x-s\|^2=\|Tx-s\|^2\le\|x-s\|^2-c_0\delta(x)^2,
\]
which forces \(\delta(x)=0\), i.e., \(x\in S\).
\end{enumerate}

Combining (i) and (ii) gives \(\mathrm{Fix}(T)=S\).
\end{proof}

\medskip

\noindent
To quantify the geometric coupling of the sets~$U_i$, we employ the notion of H\"older regularity,
which generalizes linear regularity and provides a unified framework for sublinear convergence rates.

\begin{definition}[H\"older Regularity]\label{def:holder-regularity}
Let \(U_1,\dots,U_m\subset H\) be closed, convex subsets of a Hilbert space \(H\), and set
\(S := \bigcap_{i=1}^m U_i.\) For \(\gamma\in(0,1]\), the collection \(\{U_i\}_{i=1}^m\) is {\(\gamma\)--H\"older regular}
if for every compact set \(K\subset H\) there exists a constant \(c>0\) such that
\[
\mathrm{dist}(x,S)\ \le\ c\left(\max_{1\le i\le m}\mathrm{dist}(x,U_i)\right)^{\gamma}
\qquad\text{for all }x\in K.
\]
\end{definition}

\noindent Using the notation \eqref{def:distnotation}, this means that there exist constants \(c > 0\) and \(\gamma \in (0,1]\) such that
\begin{equation}\label{eq:holder-eb}
\mathfrak d(z) \le c\, \delta(z)^{\gamma}
\qquad \text{for all } z \text{ in a compact set.}
\end{equation}
The exponent \(\gamma\) captures the local regularity of the intersection:
the case \(\gamma = 1\) corresponds to the familiar {linear regularity} setting,
while smaller \(\gamma\) indicate weaker, sublinear coupling among the sets.

\noindent
Clearly, \(\gamma\)-H\"older regularity implies that \(S \neq \varnothing\).
This property generalizes linear regularity (recovered when \(\gamma = 1\))
and is equivalent to the H\"older error bound~\eqref{eq:holder-eb}.
Convergence of alternating projections under mere feasibility --- without any rate guarantees --- is classical; see, for instance, Bauschke and Borwein~\cite{BauschkeBorwein1996}.  
Linear convergence under linear regularity was established by Lewis, Luke, and Malick~\cite{LewisLukeMalick2009},  
while sublinear rates under H\"older regularity were proved by Luke~\cite{Luke2013},  
building on the arguments of~\cite{LewisLukeMalick2009} and the later refinements in Luke and Thao~\cite{LukeThao2018}.
\medskip

\noindent
Combining the H\"older-type error bound~\eqref{eq:holder-eb} with the local decrease
property~\eqref{eq:assT} yields the following fundamental inequality.

\begin{lemma}[H\"older-EB Induced Contraction]\label{lem:contraction}
Suppose that Assumption~\ref{ass:T} holds and that the sets $\{U_i\}_{i=1}^m$ satisfy 
the $\gamma$--H\"older error bound~\eqref{eq:holder-eb} on a compact neighborhood of $S$.
Then it holds that
\[
\mathfrak d(T x)
\;\le\;
\mathfrak d(x)\, \left(1 - \tau\, \mathfrak d(x)^{\lambda} \right),
\]
where 
\[
\tau = \frac{c_0}{2c^{2 \gamma^{-1}}} \quad \text{and} \quad \lambda = 2(\gamma^{-1}-1),
\]
for all \(x\) in a compact neighborhood of \(S\).
\end{lemma}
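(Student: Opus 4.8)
The plan is to combine the two hypotheses at a single well-chosen point and then convert a squared estimate into the stated multiplicative contraction. First I would apply Assumption~\ref{ass:T} with the optimal feasible anchor $s = P_S(x)$, the metric projection of $x$ onto $S$, so that $\|x - s\| = \mathfrak d(x)$. Since $P_S(x) \in S$ and $\mathfrak d(Tx) = \dist(Tx,S) \le \|Tx - P_S(x)\|$, inequality~\eqref{eq:assT} immediately yields the squared bound $\mathfrak d(Tx)^2 \le \mathfrak d(x)^2 - c_0\,\delta(x)^2$.

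Next I would eliminate $\delta(x)$ in favor of $\mathfrak d(x)$ using the H\"older error bound. Rearranging~\eqref{eq:holder-eb} gives $\delta(x) \ge (\mathfrak d(x)/c)^{1/\gamma}$, hence $\delta(x)^2 \ge \mathfrak d(x)^{2/\gamma}/c^{2/\gamma}$. Substituting into the previous estimate produces $\mathfrak d(Tx)^2 \le \mathfrak d(x)^2\bigl(1 - t\bigr)$ with $t := (c_0/c^{2\gamma^{-1}})\,\mathfrak d(x)^{2/\gamma - 2}$, and a direct check shows $2/\gamma - 2 = 2(\gamma^{-1}-1) = \lambda$ and $c^{2/\gamma} = c^{2\gamma^{-1}}$, matching the claimed constants. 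The case $\mathfrak d(x) = 0$ is trivial, since then $x \in S = \Fix(T)$ by Lemma~\ref{lem:FixTS}, so both sides vanish.

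Finally I would pass from the squared inequality to the first-order form by taking square roots and linearizing. The key observation is that $t \le 1$ holds automatically: the left-hand side $\mathfrak d(Tx)^2$ is nonnegative, so its upper bound $\mathfrak d(x)^2(1-t)$ must be nonnegative as well, forcing $1 - t \ge 0$. With $t \in [0,1]$ I can invoke the elementary inequality $\sqrt{1-t} \le 1 - t/2$ to obtain $\mathfrak d(Tx) \le \mathfrak d(x)\sqrt{1-t} \le \mathfrak d(x)(1 - t/2)$, which is exactly $\mathfrak d(Tx) \le \mathfrak d(x)\bigl(1 - \tau\,\mathfrak d(x)^\lambda\bigr)$ with $\tau = c_0/(2c^{2\gamma^{-1}})$.

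The main point requiring care is the validity range: the error-bound constant $c$ is only guaranteed on a compact neighborhood of $S$, so the whole argument is confined there, and one must confirm the boundary behavior of $\lambda = 2(\gamma^{-1}-1) \ge 0$ at $\gamma = 1$ (where $\lambda = 0$ and $\mathfrak d(x)^\lambda \equiv 1$). The mild subtlety worth emphasizing is that I deliberately avoid needing any a priori comparison such as $c_0 \le c^2$; instead the nonnegativity of $\mathfrak d(Tx)^2$ supplies $t \le 1$ for free, which is precisely what legitimizes the square-root linearization on the entire neighborhood rather than only asymptotically close to $S$.
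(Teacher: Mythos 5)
Your proof is correct and takes essentially the same route as the paper's: anchor Assumption~\ref{ass:T} at $s = P_S(x)$, substitute $\delta(x) \ge (\mathfrak d(x)/c)^{1/\gamma}$ from the error bound, and convert the squared estimate via $\sqrt{1-t} \le 1 - t/2$, which is exactly the square-root restatement of the paper's closing inequality $1 - 2\tau\,\mathfrak d(x)^{\lambda} \le \left(1 - \tau\,\mathfrak d(x)^{\lambda}\right)^2$. Your explicit justification that nonnegativity of $\mathfrak d(Tx)^2$ forces $t \le 1$, together with the separate treatment of the case $\mathfrak d(x) = 0$, makes rigorous a step the paper leaves implicit, but the argument is the same.
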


\begin{proof}
From 
\[
\|T x - s\|^2 \le \|x - s\|^2 -  c_0 \, \delta(x)^2
\]
and
\[
\mathfrak d(x) \le c\, \delta(x)^{\gamma},
\]
we get
\begin{align*}
    \mathfrak d(T x)^2 &\le \mathfrak d(x)^2 - c_0 \, \left(\frac{\mathfrak d(x)}{c} \right)^{2 \gamma^{-1}} 
    \\&=\mathfrak d(x)^2 \left(1 - c_0 \, \frac{\mathfrak d(x)^{2 \gamma^{-1}-2}}{c^{2 \gamma^{-1}}}\right)
    \\&=: \mathfrak d(x)^2 \left(1 - 2 \tau \, \mathfrak d(x)^\lambda\right).
\end{align*}
The result then follows by the inequality
\[
1 - 2 \tau \, \mathfrak d(x)^\lambda \le \left(1 - \tau \, \mathfrak d(x)^\lambda\right)^2.
\]
\end{proof}

A similar decay relation under H\"older regularity was used in \cite{drusvyatskiyLiWolkowicz2017illposedSDP}
to establish sublinear convergence rates for the {method of alternating projections}.
In contrast, the present analysis applies the same geometric mechanism
to Halpern-type iterations and to broader classes of quasi-nonexpansive operators,
thereby generalizing the rate behavior beyond the alternating-projection setting.

\begin{remark}
By Lemma~\ref{lem:contraction}, the sequence $(\mathfrak d(T^k x))_{k\ge0}$ satisfies the nonlinear recurrence analyzed in
\cite[Lemma~4.1]{borweinLiYao2014cyclic}. 
When $\gamma=1$ (corresponding to the linearly regular case), 
the recurrence becomes geometric, and the {distance sequence} $(\mathfrak d(T^k x))$ converges linearly to $0$.
For $\gamma<1$, \cite[Lemma~4.1]{borweinLiYao2014cyclic} yields the explicit bound
\[
\mathfrak d(T^k x)
\;\le\;
\bigl(\mathfrak d(x)^{-(2-2\gamma)/\gamma} + (2-2\gamma)\gamma^{-1}\,\tau\,k\bigr)^{-\gamma/(2-2\gamma)}
=\mathcal O\,\bigl(k^{-\gamma/(2-2\gamma)}\bigr).
\]
Moreover, since $(T^k x)_{k\ge0}$ is Fej\'er monotone with respect to $S$ by~\eqref{eq:assT},
the Fej\'er-tail estimate \cite[Proposition~5.4 (iv)]{Bauschke2017}
argued as, e.g., in \cite[Theorem~5.12]{Bauschke2017}
converts the above distance estimate into convergence of the iterates to a point in~$S$
with the same order of decay.
\end{remark}

\medskip

\begin{assumption}\label{ass:alphan} The stepsize sequence \((\alpha_k)_{k\ge1}\) satisfies \begin{enumerate}[label=(\roman*)] \item \( \alpha_k\in(0,1]\); \item \( \alpha_k\to 0; \) \item \(\displaystyle \limsup_{k\to\infty}\,\left(\frac{1}{\alpha_{k+1}}-\frac{1}{\alpha_k}\right)<2; \) \item \(\displaystyle \lim_{k\to\infty} \, \frac{\alpha_{k}}{\alpha_{k+1}} = 1.\) \end{enumerate} \end{assumption}
\smallskip
\noindent
These conditions are fulfilled, for instance, by the standard choice
\[
\alpha_k = k^{-a},
\qquad a\in(0,1].
\]
Assumption~\ref{ass:alphan} is in line with the classical stepsize requirements for Halpern-type iterations.  
In his seminal paper, Halpern~\cite{Halpern1967} already identified the two basic asymptotic conditions
\[
\alpha_k \to 0
\quad\text{and}\quad
\sum_{k=1}^\infty \alpha_k = +\infty
\]
as essentially necessary for strong convergence, even in very simple examples.  
Subsequent works on Halpern iterations for nonexpansive mappings in Hilbert spaces~\cite{lions1977approximation,wittmann1992approximation,reich1994approximating,xu2002iterative}
established strong convergence under various, progressively refined sufficient conditions on \((\alpha_k)\),
typically combining \(\alpha_k \to 0\), non-summability, and mild regularity of the stepsizes.
More recently, even adaptive rules have been analyzed in this context~\cite{he2024convergence}.

In our setting, the additional technical requirements \textup{(iii)}--\textup{(iv)} control the discrete variation of \((\alpha_k)\) and are tailored to the quantitative error analysis in Section~\ref{sec:mainresults}.  
In particular, \textup{(ii)} and \textup{(iii)} together imply the classical non-summability condition, as shown in the lemma below.  
Assumption~\ref{ass:alphan} thus subsumes the standard Halpern conditions while providing sufficient regularity to derive explicit convergence rates.

The next lemma collects several elementary but useful properties that follow from~Assumption \ref{ass:alphan}. These estimates ensure the stability of later recursive inequalities.

\begin{lemma}\label{lem:limpsupalpha}
    Suppose that \((\alpha_k)_{k\ge 1}\) is a sequence of real numbers satisfying Assumption \ref{ass:alphan}. Then, for every \(p\in(0,1]\),
    \begin{enumerate}[label=(\roman*)]
    \item it holds that \[
\sum_{k\ge1}\alpha_k=\infty;
\]
    \item we have
    \[
    \limsup_k \, \frac{\alpha_k^{p}-\alpha_{k+1}^{p}}{\alpha_k} \le 0;
    \]
    \item for all $\varepsilon > 0$ with
    \[
    \varepsilon < 2 - p \cdot \limsup_{k\to\infty}\,\left(\frac{1}{\alpha_{k+1}}-\frac{1}{\alpha_k}\right)
    \]
    there exists \(k_0\) such that
    \[
    (1-\alpha_k)^2 
  \frac{\alpha_k^{p}}{\alpha_{k+1}^{p}} \le 1 - \varepsilon \alpha_k
    \]
    for all \(k\ge k_0.\)
    \end{enumerate}
\end{lemma}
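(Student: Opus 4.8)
The plan is to treat the three items in turn, each time reducing the statement to the behavior of the reciprocal increments $d_k := \tfrac{1}{\alpha_{k+1}} - \tfrac{1}{\alpha_k}$, whose limit superior is controlled by Assumption~\ref{ass:alphan}(iii). For (i), I would first extract from $\limsup_k d_k < 2$ a finite bound $d_k \le M$ valid for all $k \ge k_1$, taking $M > 0$ without loss of generality. Telescoping then gives $\tfrac{1}{\alpha_N} = \tfrac{1}{\alpha_{k_1}} + \sum_{k=k_1}^{N-1} d_k \le \tfrac{1}{\alpha_{k_1}} + M(N-k_1)$, so $\alpha_N$ is bounded below by a constant multiple of $1/N$, and comparison with the harmonic series yields $\sum_k \alpha_k = \infty$.

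For (ii), the obstruction is that $\tfrac{\alpha_k^p - \alpha_{k+1}^p}{\alpha_k} = \alpha_k^{p-1}\bigl(1 - (\alpha_{k+1}/\alpha_k)^p\bigr)$ is, for $p<1$, an indeterminate product of the blowing-up factor $\alpha_k^{p-1}$ and a factor tending to $0$. I would resolve this with the tangent (concavity) inequality for $t \mapsto t^p$, namely $\alpha_k^p - \alpha_{k+1}^p \le p\,\alpha_{k+1}^{p-1}(\alpha_k - \alpha_{k+1})$, which holds irrespective of the sign of $\alpha_k - \alpha_{k+1}$. Substituting the identity $\alpha_k - \alpha_{k+1} = \alpha_k \alpha_{k+1} d_k$ collapses the quotient to $\tfrac{\alpha_k^p - \alpha_{k+1}^p}{\alpha_k} \le p\,\alpha_{k+1}^{p}\, d_k$. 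Since $d_k$ is bounded above and $\alpha_{k+1}^p \to 0$ by Assumption~\ref{ass:alphan}(ii), the right-hand side has nonpositive limit superior, which is exactly the claim.

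For (iii), write $L := \limsup_k d_k$ and fix $\varepsilon \in (0, 2 - pL)$. I would pass to logarithms of the target quantity,
\[
\log\!\Bigl[(1-\alpha_k)^2 \tfrac{\alpha_k^p}{\alpha_{k+1}^p}\Bigr] = 2\log(1-\alpha_k) + p\log\!\bigl(1 + \alpha_k d_k\bigr),
\]
using $\alpha_k/\alpha_{k+1} = 1 + \alpha_k d_k > 0$. The one-sided bounds $\log(1-\alpha_k) \le -\alpha_k$ and $\log(1+\alpha_k d_k) \le \alpha_k d_k$ then give the clean estimate $(1-\alpha_k)^2 \alpha_k^p/\alpha_{k+1}^p \le \exp\bigl(-\alpha_k(2 - p d_k)\bigr)$. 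Choosing $\beta$ with $\varepsilon < \beta < 2 - pL$, the bound $d_k \le (2-\beta)/p$ holds for all large $k$, whence $2 - p d_k \ge \beta$ and the exponent is $\le -\beta\alpha_k$. It remains to show $e^{-\beta\alpha_k} \le 1 - \varepsilon\alpha_k$ for large $k$; this follows because $\alpha \mapsto 1 - \varepsilon\alpha - e^{-\beta\alpha}$ vanishes at $0$ with derivative $\beta - \varepsilon > 0$ there, hence is nonnegative for small $\alpha$, and $\alpha_k \to 0$.

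The main obstacle is item (iii). A direct binomial expansion of $(\alpha_k/\alpha_{k+1})^p = (1+\alpha_k d_k)^p$ produces a second-order remainder of order $(\alpha_k d_k)^2$, and after dividing by $\alpha_k$ this leaves a term comparable to $\alpha_k d_k^2$ whose vanishing is not guaranteed, since Assumption~\ref{ass:alphan} controls only $\limsup_k d_k$ from above and not $|d_k|$. Passing to logarithms circumvents this entirely: the inequalities $\log(1+t) \le t$ are exact, one-sided, and monotone, so they yield a bound linear in $\alpha_k$ with the correct coefficient $2 - p d_k$, after which the strict gap $\beta - \varepsilon > 0$ absorbs the residual exponential curvature.
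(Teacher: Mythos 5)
Your proposal is correct, and in items (i) and (ii) it is essentially the paper's own argument: telescoping the reciprocal increments $d_k := \tfrac{1}{\alpha_{k+1}}-\tfrac{1}{\alpha_k}$ against the harmonic series for (i), and the tangent-line (concavity) inequality for $t\mapsto t^p$ combined with the identity $\alpha_k-\alpha_{k+1}=\alpha_k\alpha_{k+1}d_k$ for (ii). Where you genuinely depart is (iii). The paper reuses the same concavity bound to get $\alpha_k^p/\alpha_{k+1}^p\le 1+p\mu\alpha_k$ for a constant $\mu$ slightly above $L:=\limsup_k d_k$ (still satisfying $\varepsilon<2-p\mu$), then expands the product to obtain $(1-\alpha_k)^2(1+p\mu\alpha_k)\le 1-(2-p\mu)\alpha_k+\alpha_k^2$ and absorbs the $\alpha_k^2$ term using $\alpha_k\to0$ and the strict gap $2-p\mu>\varepsilon$. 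You instead pass to logarithms, apply $\log(1+t)\le t$ to both factors to get $(1-\alpha_k)^2\,\alpha_k^p/\alpha_{k+1}^p\le e^{-\alpha_k(2-pd_k)}\le e^{-\beta\alpha_k}$ for $\beta\in(\varepsilon,2-pL)$, and close with a first-derivative argument showing $e^{-\beta\alpha_k}\le 1-\varepsilon\alpha_k$ for small $\alpha_k$. The two routes are equivalent in substance --- both are one-sided first-order bounds exploiting the strict gap and $\alpha_k\to0$ to absorb curvature --- but yours trades the paper's polynomial bookkeeping for an exponential comparison plus a small calculus lemma. One remark on your stated ``main obstacle'': the second-order remainder you fear from a binomial expansion of $(1+\alpha_k d_k)^p$ never arises if one simply reuses the one-sided concavity bound from (ii), namely $(1+\alpha_k d_k)^p\le 1+p\alpha_k d_k$, which is exact and remainder-free; that is precisely how the paper handles (iii), so the logarithmic detour, while perfectly valid, is not forced.
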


\begin{proof}
(i) With Assumption \ref{ass:alphan}~(iii), choose $K$ such that
\[
\frac{1}{\alpha_{k+1}}-\frac{1}{\alpha_k} < 2
\qquad\forall \ k\ge K.
\]
Summing from $K$ to $k-1$ yields
\[
\frac{1}{\alpha_k}
\;<\;
\frac{1}{\alpha_K}+2(k-K),
\]
hence
\[
\sum_{k\ge1}\alpha_k
\;\ge\;
\sum_{k\ge K}\frac{1}{\alpha_K^{-1}+2(k-K)}
\,=\,\infty,
\]
which proves (i).
\medskip

\par (ii) Fix \(p\in(0,1]\). Since \(t\mapsto t^p\) is concave on \((0,\infty)\), we obtain
\begin{equation}\label{concavprop}
\alpha_k^p-\alpha_{k+1}^p
\le p\,\alpha_{k+1}^{\,p-1}(\alpha_k-\alpha_{k+1}).
\end{equation}
Dividing by \(\alpha_k\) and using \(\frac{\alpha_k-\alpha_{k+1}}{\alpha_k} =\alpha_{k+1}\left(\frac{1}{\alpha_{k+1}}-\frac{1}{\alpha_k}\right),\) the concavity estimate yields
\begin{equation}\label{firstineqalpha}
\frac{\alpha_k^p-\alpha_{k+1}^p}{\alpha_k}
\le
p\,\alpha_{k+1}^{\,p}\left(\frac{1}{\alpha_{k+1}}-\frac{1}{\alpha_k}\right),
\end{equation}
which gives (ii) by Assumption \ref{ass:alphan}~(iii) and \(\alpha_{k+1}^{\,p}\to 0.\)

\medskip

For (iii), by concavity again,
\begin{equation}\label{step:limsupalpha1}
    \frac{\alpha_k^p}{\alpha_{k+1}^p}-1
=\frac{\alpha_k^p-\alpha_{k+1}^p}{\alpha_{k+1}^p}
\stackrel{\eqref{concavprop}}{\le}
p\,\frac{\alpha_k-\alpha_{k+1}}{\alpha_{k+1}}
= p\,\alpha_k\left(\frac{1}{\alpha_{k+1}}-\frac{1}{\alpha_k}\right).
\end{equation}
Let \(L:= \limsup \,\left(\frac{1}{\alpha_{k+1}}-\frac{1}{\alpha_k}\right) < 2.\) Given $\varepsilon \in (0,2-pL),$ take  \(\mu>L\) with \(\varepsilon < 2 - p \mu.\) Then from \eqref{step:limsupalpha1}, there exists \(k_0\) such that for all \(k\ge k_0\),
\[
\frac{\alpha_k^p}{\alpha_{k+1}^p}
\le 1 + p\mu\,\alpha_k.
\]
Therefore, for \(k\ge k_0\),
\begin{align*}
    (1-\alpha_k)^2\frac{\alpha_k^p}{\alpha_{k+1}^p}
&\le
(1-2\alpha_k+\alpha_k^2)(1+p\mu\alpha_k)
\\&=
1-(2-p\mu)\alpha_k
+
(1-2p\mu)\alpha_k^2
+
p\mu\alpha_k^3
\\&=
1-(2-p\mu)\alpha_k+\mathcal{O}(\alpha_k^2).
\end{align*}
Since \(2-p\mu>\varepsilon\), the \(\mathcal{O}(\alpha_k^2)\) term is eventually bounded above by
\((2-p\mu-\varepsilon)\alpha_k\). Hence, for all sufficiently large \(k\),
\[
(1-\alpha_k)^2\frac{\alpha_k^p}{\alpha_{k+1}^p}
\le
1-\varepsilon\alpha_k,
\]
proving (iii).
\end{proof}

We shall use the following elementary consequence of Xu's scalar sequence lemma
\cite[Lemma~2.5]{xu2002iterative}, stated in the form needed below.

\begin{lemma}\label{lem:limitcontraction}
Let $(d_k)$ and $(\sigma_k)$ be real sequences. Assume that $(\nu_k)$ satisfies $\nu_k \in [0,1]$ for all large $k$ and
\[
\sum_{k=1}^\infty \nu_k = \infty,
\]
and that
\[
d_{k+1} \le (1-\nu_k)d_{k} + \nu_k \sigma_k,
\qquad k\ge1.
\]
If $\displaystyle \limsup_{k\to\infty} \sigma_k \le 0$, then $\displaystyle \limsup_{k\to\infty} d_k \le 0$.
\end{lemma}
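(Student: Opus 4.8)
The plan is to prove the conclusion by fixing an arbitrary $\varepsilon > 0$, showing $\limsup_k d_k \le \varepsilon$, and then letting $\varepsilon \downarrow 0$. The mechanism is a telescoping of the one-step recursion after the constant $\varepsilon$ has been absorbed into a shifted sequence; the non-summability $\sum_k \nu_k = \infty$ then forces the resulting product to vanish.

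First I would fix $\varepsilon > 0$ and use the two hypotheses $\limsup_k \sigma_k \le 0$ and ``$\nu_k \in [0,1]$ for all large $k$'' to pick an index $N$ beyond which simultaneously $\sigma_k \le \varepsilon$ and $\nu_k \in [0,1]$. For $k \ge N$ the assumed recursion then gives $d_{k+1} \le (1-\nu_k)d_k + \nu_k \varepsilon$. Setting $e_k := d_k - \varepsilon$ and subtracting $\varepsilon$ from both sides collapses this to the homogeneous inequality
\[
e_{k+1} \le (1-\nu_k)\, e_k, \qquad k \ge N,
\]
since $\nu_k \varepsilon - \varepsilon = -(1-\nu_k)\varepsilon$. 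Because each factor $1-\nu_k$ is nonnegative (here the bound $\nu_k \le 1$ is essential), iterating preserves the inequality direction and yields $e_k \le e_N \prod_{j=N}^{k-1}(1-\nu_j)$ for $k > N$.

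To conclude I would split on the sign of $e_N$. If $e_N \le 0$, nonnegativity of the product gives $e_k \le 0$, i.e.\ $d_k \le \varepsilon$, at once. If $e_N > 0$, I would apply $1 - t \le e^{-t}$ termwise to obtain
\[
0 \le \prod_{j=N}^{k-1}(1-\nu_j) \le \exp\!\Bigl(-\sum_{j=N}^{k-1}\nu_j\Bigr),
\]
whose right-hand side tends to $0$ as $k \to \infty$ precisely because $\sum_k \nu_k = \infty$; hence $e_N \prod_{j=N}^{k-1}(1-\nu_j) \to 0$ and $\limsup_k e_k \le 0$. Either way $\limsup_k d_k \le \varepsilon$, and letting $\varepsilon \downarrow 0$ finishes the argument. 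The one point that needs care---and the only genuine obstacle---is the telescoping step: the inequality $e_{k+1} \le (1-\nu_k)e_k$ iterates correctly only because every multiplier $1-\nu_j \ge 0$, so that multiplying a valid bound by it keeps the direction. This is exactly where the hypothesis $\nu_k \le 1$ is used, and where a naive argument ignoring the possibly varying sign of $e_k$ would break down.
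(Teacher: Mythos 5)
Your proof is correct and follows essentially the same route as the paper's: fix $\varepsilon>0$, absorb it into the shifted sequence $d_k-\varepsilon$, telescope the resulting homogeneous inequality (valid precisely because each factor $1-\nu_k\ge 0$), and kill the product via $1-t\le e^{-t}$ together with $\sum_k \nu_k=\infty$. The only difference is bookkeeping: where you split on the sign of $e_N$, the paper iterates the positive part $s_k:=(d_k-\varepsilon)_+$, which keeps every term nonnegative and avoids the case analysis --- both handle the sign issue correctly.
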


\begin{proof}
For all large \(k\) with \(\nu_k\in[0,1]\), by monotonicity and convexity of
\(r\mapsto r^+:=\max\{0,r\}\),
\[
d_{k+1}^+
\le
\bigl((1-\nu_k)d_k+\nu_k\sigma_k\bigr)^+
\le
(1-\nu_k)d_k^+ + \nu_k\sigma_k^+.
\]
Since \(\limsup_k\sigma_k\le0\), we have \(\sigma_k^+\to0\). Xu's scalar sequence lemma, applied with
\[
s_k=d_k^+,\qquad \alpha_k=\nu_k,\qquad \beta_k=\sigma_k^+,\qquad \gamma_k=0,
\]
gives \(d_k^+\to0\), hence \(\limsup_k d_k\le0\).
\end{proof}

The following technical lemma controls the asymptotic order of sequences governed by a nonlinear recursion. It constitutes the main ingredient in the proof of the convergence rate theorem.

\begin{lemma}\label{lem:orderrec}
    Suppose that \((\beta_k)_{k\ge 1}\) is a sequence of nonnegative numbers and \((\alpha_k)_{k\ge 1}\) satisfies Assumption \ref{ass:alphan}. If \(M > 0, \ \tau \in (0,1]\) and \(\lambda \ge 0\) are constants such that
    \begin{equation}\label{eq:orderrec}
    \beta_{k+1} \le \alpha_k M + \beta_k \left(1 - \tau\, \beta_k^{\lambda} \right) \qquad \forall \ k\ge 1,
    \end{equation}
    then 
    \[
    \limsup_k \, \frac{\beta_k}{\alpha_k^{1/(1+\lambda)}} \le \left(\frac{M}{\tau}\right)^{1/(1+\lambda)}.
    \]
    In particular, \(\beta_k = \mathcal O\left(\alpha_k^{1/(1+\lambda)}\right).\)
\end{lemma}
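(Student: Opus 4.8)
The plan is to normalize the sequence and exploit a \textbf{restoring force}. Set \(p := 1/(1+\lambda)\), \(L := (M/\tau)^{1/(1+\lambda)}\), and \(r_k := \beta_k/\alpha_k^{p}\); the goal becomes \(\limsup_k r_k \le L\), which is exactly the asserted bound. Since \(p(1+\lambda)=1\) we have \(\beta_k^{1+\lambda} = r_k^{1+\lambda}\alpha_k\), so \eqref{eq:orderrec} rewrites as
\[
\beta_{k+1} \le \beta_k + \alpha_k\bigl(M - \tau\, r_k^{1+\lambda}\bigr).
\]
The crucial structural fact is \(\tau L^{1+\lambda}=M\), so the bracket is negative \emph{exactly} when \(r_k>L\): whenever the normalized sequence overshoots \(L\), the raw sequence \(\beta_k\) strictly decreases. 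This sign mechanism is the engine of the proof. I would also dispose of the degenerate case \(\lambda=0\) separately, where \(p=1\) and the recursion is the linear contraction \(\beta_{k+1}\le\alpha_k M+(1-\tau)\beta_k\); dividing by \(\alpha_{k+1}\) and using \(\alpha_k/\alpha_{k+1}\to1\) from Assumption~\ref{ass:alphan}(iv) gives \(\ell\le(1-\tau)\ell+M\) for \(\ell:=\limsup_k r_k\), i.e.\ \(\ell\le M/\tau=L\) directly. Henceforth assume \(\lambda>0\), so that \(1-p=\lambda/(1+\lambda)>0\) and \(\alpha_k^{1-p}\to0\).

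Next I would fix \(\delta>0\), set \(\eta := \tau(L+\delta)^{1+\lambda}-M>0\), and show that \(r_k\le L+\delta\) holds for infinitely many \(k\). Indeed, if \(r_k>L+\delta\) for all \(k\ge N\), the rewritten recursion gives \(\beta_{k+1}\le\beta_k-\eta\,\alpha_k\); telescoping together with the non-summability \(\sum_k\alpha_k=\infty\) from Lemma~\ref{lem:limpsupalpha}(i) forces \(\beta_k\to-\infty\), contradicting \(\beta_k\ge0\). Thus the set \(G:=\{k:\ r_k\le L+\delta\}\) is infinite, hence cofinal.

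I would then pass to a recursion for \(r_k\): dividing by \(\alpha_{k+1}^{p}\) and writing \(q_k:=(\alpha_k/\alpha_{k+1})^{p}\) yields \(r_{k+1}\le q_k r_k + q_k\alpha_k^{1-p}(M-\tau r_k^{1+\lambda})\). By Assumption~\ref{ass:alphan}(iv) we have \(q_k\to1\), and the concavity estimate \eqref{step:limsupalpha1} combined with Assumption~\ref{ass:alphan}(iii) sharpens this to \(q_k\le 1+C\alpha_k\) for some \(C>0\) and all large \(k\). This produces two regimes: when \(r_k>L+\delta\) the bracket is \(\le-\eta<0\), so \(r_{k+1}\le q_k r_k\le(1+C\alpha_k)r_k\) (growth by a factor \(1+\mathcal O(\alpha_k)\) only); when \(r_k\le L+\delta\) the bracket is \(\le M\), so \(r_{k+1}\le q_k(L+\delta)+q_k\alpha_k^{1-p}M= L+\delta+\epsilon_k\) with \(\epsilon_k\to0\) (using \(\alpha_k\to0\) and \(\alpha_k^{1-p}\to0\)). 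For the closing, given large \(k\) let \(j=j(k)\) be the largest index in \(G\) below \(k\) (well defined and \(\to\infty\) since \(G\) is cofinal); by maximality \(r_i>L+\delta\) for all \(j<i<k\), so the above-barrier estimate telescopes to \(r_k\le (L+\delta+\epsilon_j)\prod_{i=j+1}^{k-1}(1+C\alpha_i)\).

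The main obstacle is controlling this product, since \(\prod(1+C\alpha_i)\le\exp\!\bigl(C\sum_{i=j+1}^{k-1}\alpha_i\bigr)\) would blow up over a long excursion. The resolution reuses the restoring force: on the excursion \(\beta_{i+1}\le\beta_i-\eta\alpha_i\), so \(0\le\beta_k\le\beta_{j+1}-\eta\sum_{i=j+1}^{k-1}\alpha_i\), whence \(\sum_{i=j+1}^{k-1}\alpha_i\le\beta_{j+1}/\eta\le (L+\delta+\epsilon_j)\alpha_{j+1}^{p}/\eta\to0\) as \(j\to\infty\) (because \(\alpha_{j+1}^{p}\to0\)). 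Consequently the product tends to \(1\), giving \(r_k\le(L+\delta+\epsilon_j)\,e^{o(1)}\) and therefore \(\limsup_k r_k\le L+\delta\). Letting \(\delta\downarrow0\) yields \(\limsup_k r_k\le L=(M/\tau)^{1/(1+\lambda)}\), and the ``in particular'' statement \(\beta_k=\mathcal O(\alpha_k^{1/(1+\lambda)})\) follows immediately from the definition of \(r_k\).
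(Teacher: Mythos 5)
Your proof is correct, but it takes a genuinely different route from the paper's. The paper normalizes with a shifted index, $z_k=\beta_k/\alpha_{k-1}^{p}$, first proves boundedness of $(z_k)$ by a subsequence contradiction, then linearizes the nonlinearity via convexity of $t\mapsto t^{1+\lambda}$ around the target value $\ell$, which turns the recursion into $(z_{k+1}-\ell)\le(1-c\,\alpha_k^{1-p})(z_k-\ell)+c\,\alpha_k^{1-p}\tilde\varepsilon_k$ and lets it invoke the contraction Lemma~\ref{lem:limitcontraction}. You instead keep the direct normalization $r_k=\beta_k/\alpha_k^{p}$ and run a barrier/excursion argument: the restoring force makes the set $G=\{k:r_k\le L+\delta\}$ cofinal, and the same restoring force bounds $\sum_{i}\alpha_i$ over any excursion above the barrier by $\beta_{j+1}/\eta=O(\alpha_{j+1}^{p})\to0$, so the multiplicative growth $\prod_i(1+C\alpha_i)\le\exp\bigl(C\sum_i\alpha_i\bigr)$ during an excursion is $e^{o(1)}$; this is the key insight that replaces both the boundedness step and the contraction lemma. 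Your argument is self-contained and exposes the mechanism (excursions above the threshold are self-limiting) more transparently; the paper's argument is shorter given its supporting lemmas and, notably, covers $\lambda=0$ uniformly, whereas your main argument needs $\alpha_k^{1-p}\to0$ and hence the separate case $\lambda=0$. In that special case there is one small patch to make: the deduction $\ell\le(1-\tau)\ell+M$ for $\ell:=\limsup_k r_k$ presupposes $\ell<\infty$. This is easy to secure: for $\tau<1$ pick $\epsilon>0$ with $(1-\tau)(1+\epsilon)<1$; since $\alpha_k/\alpha_{k+1}\le1+\epsilon$ eventually by Assumption~\ref{ass:alphan}(iv), the recursion $r_{k+1}\le(1+\epsilon)M+(1-\tau)(1+\epsilon)r_k$ is a strict contraction and $(r_k)$ is bounded (the case $\tau=1$ is immediate). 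With that one-line fix, both cases of your proof are complete.
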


\begin{proof} Let us denote 
\[
p:=\frac{1}{1+\lambda}, \qquad r_k := \frac{\alpha_k^p}{\alpha_{k+1}^p}, \qquad z_k := \frac{\beta_k}{\alpha_{k-1}^p}, \qquad \ell = \left(\frac{M}{\tau}\right)^{1/(1+\lambda)}.
\]
It suffices to show that
\[
\limsup z_k \le \ell.
\]
Substituting \(\beta_k = \alpha_{k-1}^p z_{k}\) and $M = \tau \ell^{1+\lambda}$ into \eqref{eq:orderrec} we obtain
\begin{equation}\label{gs658}
    \alpha_k^p z_{k+1}
\;\le\;
\alpha_k \tau \ell^{1+\lambda}
+
\alpha_{k-1}^p z_k
-
\tau\alpha_{k-1}^{p(1+\lambda)} z_k^{1+\lambda}
\;=\;
\alpha_k \tau \ell^{1+\lambda}
+
\alpha_{k-1}^p z_k
-
\tau\alpha_{k-1} z_k^{1+\lambda}.
\end{equation}
Subtracting $\alpha_k^p z_k$ from both sides and dividing by $\tau\alpha_k$ yields
\begin{equation}\label{step:orderrec1}
        \frac{z_{k+1}-z_k}{\tau\alpha_k^{1-p}}
\;\le\;z_k \cdot \frac{\alpha_{k-1}^p-\alpha_k^p}{\tau\alpha_k}
+
\ell^{1+\lambda}
-
\frac{\alpha_{k-1}}{\alpha_k}\,z_k^{1+\lambda}.
\end{equation}
First, we show that $(z_k)$ is bounded. By contradiction, assume that there exists a subsequence $z_{n_k}$ such that \(z_{n_k+1} \ge z_{n_k}\) and \(z_{n_k+1} \to \infty.\) From \eqref{gs658}:
\begin{equation*}
        z_{{n_k}+1}
\;\le\;
\alpha_{n_k}^{1-p}\tau \ell^{1+\lambda}
+
\frac{\alpha_{{n_k}-1}^p}{\alpha_{n_k}^p} z_{n_k}
\;\le\;
\tau \ell^{1+\lambda}
+
\frac{\alpha_{{n_k}-1}^p}{\alpha_{n_k}^p} z_{n_k}.
\end{equation*}
Since $\alpha_{{n_k}-1}^p/\alpha_{n_k}^p \to 1$ by Assumption \ref{ass:alphan}~(iv), we must also have \(z_{n_k} \to \infty.\) But from \eqref{step:orderrec1} and $z_{{n_k}+1}\ge z_{n_k}$:
\begin{equation}\label{gcdgva}
        0 
\;\le\;
\ell^{1+\lambda}
-
\frac{\alpha_{{n_k}-1}}{\alpha_{n_k}}\,z_{n_k} \left(z_{n_k}^{\lambda} - \frac{\alpha_{{n_k}-1}^p-\alpha_{n_k}^p}{\tau\alpha_{{n_k}-1}}\right)
\end{equation}
where 
\[
\limsup_{k \to \infty} \, \frac{\alpha_{{n_k}-1}^p-\alpha_{n_k}^p}{\tau\alpha_{{n_k}-1}} \le 0 \qquad \text{and} \qquad \lim_{k \to \infty} \frac{\alpha_{{n_k}-1}}{\alpha_{n_k}} = 1
\]
by Lemma~\ref{lem:limpsupalpha} (ii) and Assumption \ref{ass:alphan}~(iv), so the right hand side of \eqref{gcdgva} diverges to $-\infty;$ a contradiction. Therefore, we have shown that $(z_k)$ is bounded.

\medskip

\noindent Next, from \eqref{step:orderrec1} and convexity of \(t\mapsto t^{1+\lambda}\) on \((0,\infty)\) we get
\begin{align*}
                    \frac{z_{k+1}-z_k}{\tau\alpha_k^{1-p}}
\;&\le\;z_k \cdot \frac{\alpha_{k-1}^p-\alpha_k^p}{\tau\alpha_k}
+
 \ell^{1+\lambda}-z_k^{1+\lambda} 
+ \left(1-
\frac{\alpha_{k-1}}{\alpha_k} \right)\,z_k^{1+\lambda}
\\&\le\;z_k \cdot \frac{\alpha_{k-1}^p-\alpha_k^p}{\tau\alpha_k}
+
(1+\lambda)\ell^{\lambda}(\ell-z_k)
+ z_k \cdot \left(1-
\frac{\alpha_{k-1}}{\alpha_k} \right)\,z_k^\lambda
\\&=:\; (1+\lambda)\ell^{\lambda}(\ell-z_k) + z_k\varepsilon_k,
\end{align*}
and hence
\[
z_{k+1}-z_k \le \tau\alpha_k^{1-p} \, (1+\lambda)\ell^{\lambda}(\ell-z_k) + \tau\alpha_k^{1-p}z_k \, \varepsilon_k.
\]
Denoting \(c:=\tau(1+\lambda)\ell^{\lambda}\) and \(\tilde \varepsilon_k := (\tau \, z_k \, \varepsilon_k)/c\), the last inequality reads
\[
    z_{k+1}-z_k \le c \, \alpha_k^{1-p}(\ell-z_k) + c \, \alpha_k^{1-p} \tilde \varepsilon_k,
\]
or
\[
    (z_{k+1}-\ell) \le (1-c \, \alpha_k^{1-p})(z_k-\ell) + c \, \alpha_k^{1-p} \tilde \varepsilon_k.
\]
Using Lemma~\ref{lem:limpsupalpha} (ii), Assumption \ref{ass:alphan}~(iv) and boundedness of $(z_k)$,
we easily deduce
\[
\limsup_{k \to \infty} \,\frac{\alpha_{k-1}^p-\alpha_k^p}{\tau\alpha_k} \le 0 \qquad \text{and} \qquad \lim_{k \to \infty} \left(1-
\frac{\alpha_{k-1}}{\alpha_k} \right)\,z_k^\lambda = 0,
\]
hence \(\limsup_k \tilde \varepsilon_k \le 0.\) Since \(\sum \, c \, \alpha_k^{1-p} \ge \sum \, c \, \alpha_k = \infty,\) by Lemma \ref{lem:limitcontraction} applied to
\[
\nu_k := c \, \alpha_k^{1-p}, \qquad d_k := z_k-\ell, \qquad \sigma_k := \tilde \varepsilon_k
\]
we obtain
\[
\limsup \, (z_k-\ell) \le 0,
\]
as needed.
\end{proof}

\begin{remark}
The bound established in Lemma \ref{lem:orderrec} is tight. Indeed, let \(\alpha_k\) be any non-increasing sequence and define \(\beta_k = (M/\tau)^{1/(1+\lambda)} \alpha_k^{1/(1+\lambda)}\). Then,
\[
\alpha_k M - \tau \beta_k^{1+\lambda} = \alpha_k M - \tau \left( \frac{M}{\tau} \alpha_k \right) = 0.
\]
Consequently, the right-hand side of \eqref{eq:orderrec} reduces to \(\beta_k\). Since \(\alpha_k\) is non-increasing, \(\beta_{k+1} \le \beta_k\) holds, satisfying the recursion condition. The resulting ratio
\[
\frac{\beta_k}{\alpha_k^{1/(1+\lambda)}} = \left(\frac{M}{\tau}\right)^{1/(1+\lambda)}
\]
matches the upper bound exactly for all \(k\).
\end{remark}

\section{Main Results}\label{sec:mainresults}

\subsection{Global convergence of Halpern's method}

The next lemma is an important device used to conclude that 
a nonnegative sequence converges to zero once a suitable averaged decrease is present.  
It is particularly useful because it does not assume monotonicity or summability of the decrease terms.

\begin{lemma}\label{lem:gamma-an-zero}
Let $(d_k)\subset[0,\infty)$, and let $(\nu_k)\subset[0,\infty)$ and $(\sigma_k)\subset[0,\infty)$ satisfy $\sigma_k\to0$.  
Assume that
\begin{equation}\label{eq:an-recursion}
d_{k+1}\;\le\;d_k+\sigma_k-\nu_k
\qquad(\forall \ k\ge1),
\end{equation}
and that the following implication holds:
\begin{equation}\label{eq:gammaimplication}
    \text{if } \nu_{n_k}\to0 \text{ for some subsequence } (n_k) 
\text{ then } d_{n_k}\to0.
\end{equation}
Then $d_k$ converges to $0$.
\end{lemma}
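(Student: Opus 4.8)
The plan is to prove $d_k\to0$ by a dichotomy on the monotonicity of $(d_k)$, in the spirit of Maing\'e's technique for non-monotone recursions. Because $(d_k)$ is nonnegative, the whole role of the implication \eqref{eq:gammaimplication} is to convert ``$\nu$ vanishes along an index set'' into ``$d$ vanishes along that index set''; the real work is to exhibit index sets along which $\nu$ vanishes and which simultaneously control the whole sequence $(d_k)$. Throughout, the recursion \eqref{eq:an-recursion} rearranges to $0\le\nu_k\le(d_k-d_{k+1})+\sigma_k$, which is the single algebraic identity driving both cases, together with $\sigma_k\to0$.

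First I would dispatch the easy case in which $(d_k)$ is eventually non-increasing, say $d_{k+1}\le d_k$ for all $k\ge k_0$. Then $(d_k)$ is monotone and bounded below by $0$, hence convergent, so $d_k-d_{k+1}\to0$. The bound $0\le\nu_k\le(d_k-d_{k+1})+\sigma_k$ then forces $\nu_k\to0$ along the full sequence, and applying \eqref{eq:gammaimplication} with $n_k=k$ yields $d_k\to0$.

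The main case, and the principal obstacle, is when $(d_k)$ is not eventually non-increasing, i.e.\ there are infinitely many ascent indices $k$ with $d_{k+1}>d_k$. Here I would invoke the Maing\'e-type construction: for $k$ large set $\tau(k):=\max\{\,j\le k:\ d_{j+1}>d_j\,\}$, which is then well defined, nondecreasing, and satisfies $\tau(k)\to\infty$. The delicate point is to establish the two structural estimates $d_{\tau(k)}\le d_{\tau(k)+1}$ and $d_k\le d_{\tau(k)+1}$: the first is immediate from $\tau(k)$ being an ascent index, while the second follows because maximality of $\tau(k)$ makes $(d_j)$ non-increasing on $\{\tau(k)+1,\dots,k+1\}$. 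Evaluating the recursion at index $\tau(k)$ and using $d_{\tau(k)}-d_{\tau(k)+1}\le0$ gives $0\le\nu_{\tau(k)}\le\sigma_{\tau(k)}\to0$, so $\nu$ vanishes along the index subsequence $\tau(k)$ (passing, if needed, to the distinct values of $\tau(k)$ to get a genuine strictly increasing subsequence).

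Finally I would close the loop using \eqref{eq:gammaimplication} applied to $(\tau(k))$, which gives $d_{\tau(k)}\to0$; feeding this back into the recursion yields $d_{\tau(k)+1}\le d_{\tau(k)}+\sigma_{\tau(k)}\to0$, and the second structural estimate $d_k\le d_{\tau(k)+1}$ transfers the decay to the entire sequence, so $d_k\to0$. The hard part is precisely the verification of the two Maing\'e estimates and that $\tau(k)\to\infty$, since these are what turn control at the sparse ascent indices into uniform control of $\limsup_k d_k$; no a priori boundedness of $(d_k)$ is needed, as it is produced by the argument itself.
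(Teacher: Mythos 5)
Your proof is correct, but it takes a genuinely different route from the paper's. The paper argues in two contradiction steps: first it shows $\liminf_k \nu_k = 0$ using only the nonnegativity of $(d_k)$ (if $\nu_k$ stayed bounded away from zero, \eqref{eq:an-recursion} would drive $d_k$ below zero), which via \eqref{eq:gammaimplication} gives $\liminf_k d_k = 0$; then, assuming $\limsup_k d_k = \bar a > 0$, it selects crossing indices $j_k$ with $d_{j_k} \le \bar a/2 < d_{j_k+1}$, notes that \eqref{eq:an-recursion} at such an index forces $\nu_{j_k} < \sigma_{j_k} \to 0$, and derives a contradiction from \eqref{eq:gammaimplication}. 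Your Maing\'e-style dichotomy replaces both contradictions with a constructive argument; note that the paper's crossing indices are precisely ascent indices, so the algebraic core of the two proofs is the same observation that at any ascent index $0 \le \nu_k \le (d_k - d_{k+1}) + \sigma_k \le \sigma_k$. What your route buys: it is direct rather than by contradiction, it isolates the easy eventually-monotone case (where $\nu_k\to0$ along the full sequence, which is a legitimate subsequence in \eqref{eq:gammaimplication}), and the maximal-ascent-index estimates $d_{\tau(k)} \le d_{\tau(k)+1}$ and $d_k \le d_{\tau(k)+1}$ transfer the decay $d_{\tau(k)}\to0$ to the whole sequence in one stroke, yielding boundedness of $(d_k)$ as a byproduct. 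What the paper's route buys: it avoids the bookkeeping attached to $\tau(k)$ (well-definedness, monotonicity, $\tau(k)\to\infty$, the two structural estimates, and passing to the distinct values of $\tau(k)$ to obtain a genuine subsequence), requiring only the elementary selection of indices where the sequence crosses the level $\bar a/2$ upward. All steps of your argument check out, including the final chain $d_k \le d_{\tau(k)+1} \le d_{\tau(k)} + \sigma_{\tau(k)} \to 0$.
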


\begin{proof} 
We first prove that $\liminf \nu_k = 0$. From \eqref{eq:an-recursion} we have
\[
d_{k+1}
\;\le\;
d_k - \bigl(\nu_k - \sigma_k\bigr).
\]
Suppose, for contradiction, that $\liminf \nu_k = \underline \gamma > 0$. This implies from $\sigma_k\to0$:
\[
\liminf_{k\to\infty} \ (\nu_k - \sigma_k) = \underline \gamma.
\]
Hence, there exists $k_0$ such that for all $k\ge k_0$, we have 
$\nu_k - \sigma_k \ge \tfrac{1}{2} \underline \gamma$, and therefore
\[
d_{k+1} \;\le\; d_k - \tfrac{1}{2} \underline \gamma
\qquad (\forall \ k\ge k_0),
\]
so $d_k$ becomes negative for large $k$, contradicting $d_k\ge0$. 
Thus our assumption was false and we conclude that $\liminf \nu_k = 0$, which in turn implies $\liminf d_k = 0$ by \eqref{eq:gammaimplication}.

Now we establish $\limsup d_k=0$. Assume $\limsup d_k = \bar a > 0$.  
Since $\liminf d_k = 0$, the sequence oscillates, i.e., there exist infinitely many indices $j_k$ such that
\[
d_{j_k}\le \frac{\bar a}{2},\qquad d_{j_k+1}>\frac{\bar a}{2}.
\]
Applying \eqref{eq:an-recursion} at these indices yields
\[
\frac{\bar a}{2}
< d_{j_k+1}
\le d_{j_k}+\sigma_{j_k}-\nu_{j_k}
\le \frac{\bar a}{2}+\sigma_{j_k}-\nu_{j_k},
\]
so that $0<\sigma_{j_k}-\nu_{j_k}$, that is, \ $\nu_{j_k}<\sigma_{j_k}$.  
Since $\sigma_{j_k}\to0$, we obtain $\nu_{j_k}\to0$.  
By the hypothesis \eqref{eq:gammaimplication}, $d_{j_k}\to0$.

Finally, recursion \eqref{eq:an-recursion} gives
\[
\limsup_{k\to\infty} \ d_{j_k+1}
\le
\lim_{k\to\infty} \ \bigl(d_{j_k}+\sigma_{j_k}-\nu_{j_k}\bigr)
=0,
\]
contradicting $d_{j_k+1} > \bar a/2$.  
Hence $\bar a=0$ and $\limsup d_k=0$.

\medskip
Combining both steps, we conclude that $\liminf d_k = \limsup d_k = 0$, and therefore $d_k\to0$.
\end{proof}

\begin{remark}
    We note that Lemma \ref{lem:gamma-an-zero} shares a conceptual similarity with Lemma 2.3 of \cite{wang2026improvement}. In particular, condition \eqref{eq:gammaimplication} acts as the precise analogue of condition (a4) in \cite[Lemma 2.3]{wang2026improvement}. Both results circumvent the traditional assumption of summability for the descent terms ($\nu_k$ in our case, and $c_n$ in theirs). Instead, both lemmas utilize a similar implication mechanism where the convergence of a descent indicator to zero along a subsequence forces the desired asymptotic behavior of the sequence.
\end{remark}

\noindent
We now establish the basic qualitative result: under Assumption~\ref{ass:T}
and standard stepsize conditions, the Halpern sequence converges strongly to
the best approximation point in~\(S\). This theorem is the analogue of the
classical strong convergence results for Halpern iteration with nonexpansive
operators; see, e.g.,
\cite{wittmann1992approximation,khatibzadehRanjbar2015halpern}. Here, the
fixed-point property \(\Fix(T)=S\) is not postulated a priori but deduced from
the local decrease inequality~\eqref{eq:assT}, and the limit point is identified
explicitly as the metric projection \(P_S(x)\) of the anchor \(x\).

\begin{theorem}\label{thm:globalconv}
Let $U_i$ be finitely many closed convex sets in $\RR^n$ and let $S := \bigcap_i U_i \neq \varnothing$.  
Suppose $T : \RR^n \to \RR^n$ satisfies Assumption~\ref{ass:T}.  
Let $(\alpha_k)\subset[0,1]$ satisfy $\alpha_k\to0$ and $\sum_k \alpha_k=\infty$.  
Given $x\in\RR^n$, define the Halpern iteration
\[
x_{k+1} = \alpha_k x + (1-\alpha_k) T x_k, 
\qquad x_0=x.
\]
Then $x_k \to P_Sx$.
\end{theorem}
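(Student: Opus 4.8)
The plan is to set $x^\star := P_S x$ and run the classical three-step Halpern scheme---boundedness, asymptotic feasibility, and a final geometric-decrease recursion---while replacing the usual demiclosedness argument by the local decrease inequality~\eqref{eq:assT} together with the distance--violation link of Lemma~\ref{lem:delta-to-d}. First I would record the basic estimate coming from $x_{k+1}-x^\star=(1-\alpha_k)(Tx_k-x^\star)+\alpha_k(x-x^\star)$, the identity $\|u+v\|^2\le\|u\|^2+2\langle v,u+v\rangle$, and Assumption~\ref{ass:T} at $s=x^\star$:
\[
\|x_{k+1}-x^\star\|^2\le(1-\alpha_k)^2\|x_k-x^\star\|^2-(1-\alpha_k)^2c_0\delta(x_k)^2+2\alpha_k\langle x-x^\star,\,x_{k+1}-x^\star\rangle .
\]
Dropping the middle term and using $(1-\alpha_k)^2\le 1-\alpha_k$ already gives $\|x_{k+1}-x^\star\|\le\alpha_k\|x-x^\star\|+(1-\alpha_k)\|x_k-x^\star\|$, so by induction $\|x_k-x^\star\|\le\|x-x^\star\|$ and $(x_k)$ is bounded.

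The crucial step is asymptotic feasibility, $\mathfrak d(x_k)\to0$. Here I would \emph{not} use $\|x_k-x^\star\|^2$ but the Lyapunov function $D_k:=\mathfrak d(x_k)^2$ with the \emph{moving} anchor $s_k:=P_S x_k\in S$. Since $\mathfrak d(x_{k+1})^2\le\|x_{k+1}-s_k\|^2$, the same expansion with $s_k$ in place of $x^\star$ and Assumption~\ref{ass:T} at $s=s_k$ yield
\[
D_{k+1}\le D_k-(2\alpha_k-\alpha_k^2)D_k-(1-\alpha_k)^2c_0\delta(x_k)^2+2\alpha_k\langle x-s_k,\,x_{k+1}-s_k\rangle .
\]
Setting $\nu_k:=(1-\alpha_k)^2c_0\delta(x_k)^2\ge0$ and letting $\sigma_k$ be the positive part of the remaining $\mathcal O(\alpha_k)$ terms (bounded because $(x_k)$, hence $(s_k)$ by nonexpansiveness of $P_S$, is bounded), I obtain $D_{k+1}\le D_k+\sigma_k-\nu_k$ with $\sigma_k\to0$. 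This is exactly the setting of Lemma~\ref{lem:gamma-an-zero}, and its required implication~\eqref{eq:gammaimplication} reduces precisely to Lemma~\ref{lem:delta-to-d}: if $\nu_{n_k}\to0$ then (as $(1-\alpha_{n_k})^2\to1$) $\delta(x_{n_k})\to0$, so along the bounded subsequence $\mathfrak d(x_{n_k})\to0$, i.e.\ $D_{n_k}\to0$. Lemma~\ref{lem:gamma-an-zero} then delivers $\mathfrak d(x_k)\to0$.

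With feasibility established, every cluster point of $(x_k)$ lies in the closed set $S$, so I would prove the variational limit $\limsup_k\langle x-x^\star,\,x_k-x^\star\rangle\le0$: extract a subsequence attaining the $\limsup$ and converging to some $\bar z\in S$; the obtuse-angle characterization of the projection, $\langle x-x^\star,\,s-x^\star\rangle\le0$ for all $s\in S$ (a direct consequence of Proposition~\ref{prop:pythag}), then forces the limit to be $\le0$, and reindexing gives the same bound for $x_{k+1}$. Finally, returning to the first displayed inequality, discarding the nonpositive $\delta$-term and writing it as $\|x_{k+1}-x^\star\|^2\le(1-\alpha_k)\|x_k-x^\star\|^2+\alpha_k\big(2\langle x-x^\star,x_{k+1}-x^\star\rangle\big)$, I would invoke Lemma~\ref{lem:limitcontraction} with $\nu_k=\alpha_k$ (admissible since $\sum\alpha_k=\infty$) and $\sigma_k=2\langle x-x^\star,x_{k+1}-x^\star\rangle$ to conclude $\|x_k-x^\star\|^2\to0$, i.e.\ $x_k\to P_S x$. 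The main obstacle is the feasibility step: the naive choice $D_k=\|x_k-x^\star\|^2$ makes~\eqref{eq:gammaimplication} false (feasible cluster points need not equal $x^\star$), so the whole argument hinges on using the moving projection $s_k$ and the distance $\mathfrak d(x_k)^2$ as Lyapunov function, which is exactly what aligns Lemma~\ref{lem:gamma-an-zero} with Lemma~\ref{lem:delta-to-d} and bypasses any demiclosedness hypothesis.
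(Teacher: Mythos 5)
Your proposal is correct and follows essentially the same route as the paper's own proof: boundedness from quasi-nonexpansiveness, asymptotic feasibility via the Lyapunov function $\mathfrak d(x_k)^2$ with the moving anchor $s_k = P_S x_k$ fed into Lemma~\ref{lem:gamma-an-zero} through Lemma~\ref{lem:delta-to-d}, the projection-characterization argument for $\limsup_k \langle x_k - P_Sx,\, x - P_Sx\rangle \le 0$, and the final recursion closed by Lemma~\ref{lem:limitcontraction} with $\nu_k=\alpha_k$. The only cosmetic differences are that the paper derives the feasibility recursion from convexity of $\|\cdot - s\|^2$ (taking $\sigma_k = \alpha_k \sup_j \|x - P_S x_j\|^2$) instead of your inner-product expansion, and your boundedness step is cleaner if obtained directly from the triangle inequality together with $\|Tx_k - x^\star\| \le \|x_k - x^\star\|$ rather than from the squared-norm display you quote.
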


\begin{proof}
Let $s \in S$. Subtracting $s$ from the iteration rule,
\begin{equation}\label{step:globalconv5}
x_{k+1}-s = \alpha_k(x-s) + (1-\alpha_k)(T x_k - s).
\end{equation}
Using the convexity of $\|\cdot - s\|^2$,
\begin{equation}\label{step:globalconv1}
\|x_{k+1}-s\|^2
\;\le\;
\alpha_k\|x-s\|^2 + (1-\alpha_k)\|T x_k - s\|^2.
\end{equation}
Using \eqref{eq:assT}, inequality~\eqref{step:globalconv1} yields
\begin{equation}\label{step:globalconv2}
    \begin{aligned}
            \|x_{k+1}-s\|^2
&\le
\alpha_k\|x-s\|^2 + (1-\alpha_k)(\|x_k - s\|^2 - c_0\,\delta(x_k)^2)
\\&=(1-\alpha_k) \, \|x_k - s\|^2 + \alpha_k\|x-s\|^2 - c_0 (1-\alpha_k) \delta(x_k)^2
    \end{aligned}
\end{equation}
Note that this implies, in particular,
\[
\|x_{k+1}-s\|^2 \le (1-\alpha_k) \, \|x_k - s\|^2 + \alpha_k\|x-s\|^2 \le \max \big\{\|x_k - s\|^2, \|x-s\|^2 \big\},
\]
so that $(x_k)$ is bounded. Applying \eqref{step:globalconv2} at $s=P_Sx_k$ gives
\begin{equation}\label{step:globalconv3}
    \begin{aligned}
          \mathfrak d(x_{k+1})^2 &\le \|x_{k+1}-P_Sx_k\|^2\\
&\le (1-\alpha_k) \, \mathfrak d(x_{k})^2 + \alpha_k\|x-P_Sx_k\|^2 - c_0 (1-\alpha_k) \delta(x_k)^2.
    \end{aligned}
\end{equation}
Since $(x_k)$ is bounded, $(P_Sx_k)$ is also bounded, so we can define
\[
b := \sup_k \|x-P_Sx_k\|^2 < \infty.
\]
Inequality \eqref{step:globalconv3} then implies
\begin{equation}\label{step:globalconv4}
          \mathfrak d(x_{k+1})^2 \le \mathfrak d(x_{k})^2 + \alpha_k b - c_0 (1-\alpha_k) \delta(x_k)^2.
\end{equation}
By Lemma \ref{lem:delta-to-d}, since $(x_k)$ is bounded, if $\delta(x_{n_k})\to0$ for some subsequence $(n_k)$, then $\mathfrak d(x_{n_k}) \to 0$ as well. We can thus apply Lemma \ref{lem:gamma-an-zero} for 
\[
d_k = \mathfrak d(x_{k})^2, \qquad \nu_k = c_0 (1-\alpha_k) \delta(x_k)^2, \qquad \sigma_k = \alpha_k b.
\]
to get
\begin{equation}\label{step:globalconv6}
\mathfrak d(x_{k}) \to 0.
\end{equation}
Now let $s^\star=P_Sx.$ We next show that $\limsup_{k} \ \langle x_k - s^\star,\, x - s^\star\rangle \le 0.$ 
Suppose, by contradiction, that $\limsup_{k} \ \langle x_k - s^\star,\, x - s^\star\rangle> 0.$ Then there exist $\varepsilon>0$ and a subsequence $(n_k)$ such that 
\[
\langle x_{n_k}-s^\star,\,x-s^\star\rangle \ge \varepsilon
\qquad \forall \ k.
\]
Let $\tilde x$ be a cluster point of $(x_{n_k})$. 
Since $(x_k)$ is bounded, such a point exists, and by continuity of $\mathfrak d$ and \eqref{step:globalconv6} we have $\mathfrak d(\tilde x)=0$, i.e., $\tilde x\in S$.  
Passing to the limit along a convergent subsequence of $(x_{n_k})$, we obtain 
\[
\langle \tilde x - s^\star,\, x - s^\star\rangle \ge \varepsilon.
\]
However, since $s^\star=P_Sx$ and $\tilde x\in S$, the projection property implies 
\(\langle \tilde x - s^\star,\, x - s^\star\rangle \le 0,\)
a contradiction.  
Hence
\[
\limsup_{k\to\infty}\, \langle x_k - s^\star,\, x - s^\star\rangle \le 0.
\]
Using \eqref{step:globalconv5} and \eqref{eq:assT}, one verifies that
\[
\|(x_{k+1}-s^\star) - \alpha_k(x-s^\star)\|^2 = (1-\alpha_k)^2\|Tx_k-s^\star\|^2 \le (1-\alpha_k)^2\|x_k-s^\star\|^2.
\]
Expanding the square yields
\begin{align}
\|x_{k+1}-s^\star\|^2
& \le (1-\alpha_k)^2\|x_k-s^\star\|^2
 + 2\alpha_k\langle x_{k+1}-s^\star,\,x-s^\star\rangle -\alpha_k^2 \|x-s^\star\|^2 \\
& \le
(1-\alpha_k)\|x_k-s^\star\|^2
 + 2\alpha_k\langle x_{k+1}-s^\star,\,x-s^\star\rangle.
\end{align}
Applying Lemma~\ref{lem:limitcontraction} to
\[
\nu_k := \alpha_k \in [0,1], \qquad d_k := \|x_k - s^\star\|^2,
\qquad
\sigma_k := 2 \, \langle x_{k+1}-s^\star,\,x-s^\star\rangle,
\]
we conclude that $\|x_k - s^\star\|\to 0$, that is,
\[
x_k \to s^\star=P_Sx.
\]

\end{proof}

\noindent
In particular, taking \(T\) to be the composition of a finite number of metric projections onto closed convex sets recovers the usual convergence of Halpern-type projection methods for the convex feasibility problem, now under a unified assumption~\ref{ass:T}.  
The main purpose of the next section is to upgrade this qualitative result to {quantitative} rates that depend explicitly on the geometric regularity of \(\{U_i\}\).

\noindent
The above theorem guarantees global convergence without any regularity assumption;
the next section develops explicit convergence rates under the H\"older error bound.

\subsection{Rate of convergence under H\"older error-bound}

We now turn to the convergence properties of the Halpern iteration~\eqref{eq:halpern}
under the assumptions stated above.

\noindent
The next result quantifies the decay of the distance to the feasible set \(S\) along the Halpern iterates.  
It combines the geometric contraction from Lemma~\ref{lem:contraction} --- which exploits the local decrease property~\eqref{eq:assT} together with the H\"older error bound --- with the nonlinear recursion estimate of Lemma~\ref{lem:orderrec}.  
The resulting rate depends explicitly on the H\"older regularity exponent~\(\gamma\); when \(\gamma = 1\), it recovers a decay of the same order as the stepsizes \((\alpha_k)\), while for \(\gamma < 1\) it yields a genuinely sublinear behavior.  
This mirrors, in the setting of anchored Halpern iteration, the error-bound-based analysis of alternating projections and related projection schemes where the exponent in the error bound dictates the rate of convergence; see, for example,~\cite{borweinLiYao2014cyclic,drusvyatskiyLiWolkowicz2017illposedSDP}.

\begin{theorem}[Rate for the distance to \(S\)]\label{thm:rate-distance}
Let \((x_k)\) be generated by the Halpern iteration~\eqref{eq:halpern} with stepsizes satisfying~Assumption \ref{ass:alphan}.  
Suppose that \(T\) fulfills the local decrease property of Assumption~\ref{ass:T} with constant $c_0 \le 1$
and that the sets \(\{U_i\}\) satisfy the H\"older error bound~\eqref{eq:holder-eb} with exponent \(\gamma\in(0,1]\) and constant $c \ge 1$.
Then
\[
\limsup_k \ \frac{\mathfrak d(x_k)}{\alpha_k^{\,\gamma/(2-\gamma)}} \le \left(\frac{2c^{2 \gamma^{-1}} \ \mathfrak d(x)}{c_0} \right)^{\,\gamma/(2-\gamma)}.
\]
In particular, \(\mathfrak d(x_k)=\mathcal O\left(\alpha_k^{\,\gamma/(2-\gamma)}\right).\)
\end{theorem}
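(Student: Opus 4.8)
The plan is to establish a nonlinear recursion for $\beta_k := \mathfrak d(x_k)$ of the form required by Lemma~\ref{lem:orderrec}, namely $\beta_{k+1} \le \alpha_k M + \beta_k(1 - \tau\beta_k^\lambda)$ for suitable constants $M, \tau, \lambda$, and then read off the rate directly from that lemma with $\lambda = 2(\gamma^{-1}-1)$, so that $1/(1+\lambda) = \gamma/(2-\gamma)$. The key input is Lemma~\ref{lem:contraction}, which already gives the geometric contraction $\mathfrak d(Tx_k) \le \mathfrak d(x_k)(1 - \tau\mathfrak d(x_k)^\lambda)$ with $\tau = c_0/(2c^{2\gamma^{-1}})$, valid once the iterates live in a fixed compact neighborhood of $S$.

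Let me think about the structure. The Halpern update is $x_{k+1} = \alpha_k x + (1-\alpha_k)Tx_k$, so by the triangle inequality applied to distances (using that $\mathfrak d$ is $1$-Lipschitz, being a distance-to-a-set function) I would write $\mathfrak d(x_{k+1}) \le \alpha_k \mathfrak d(x) + (1-\alpha_k)\mathfrak d(Tx_k)$, since $\mathfrak d$ is convex and $1$-Lipschitz (this is the natural estimate, as $\mathfrak d$ of a convex combination is bounded by the convex combination of the values). Actually the cleaner route uses convexity: $\mathfrak d(x_{k+1}) \le \alpha_k\mathfrak d(x) + (1-\alpha_k)\mathfrak d(Tx_k)$. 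Substituting the contraction from Lemma~\ref{lem:contraction} gives
\[
\mathfrak d(x_{k+1}) \le \alpha_k\mathfrak d(x) + (1-\alpha_k)\mathfrak d(x_k)\bigl(1 - \tau\mathfrak d(x_k)^\lambda\bigr).
\]
Since $(1-\alpha_k) \le 1$ and the bracketed factor is at most $1$, I can absorb the $(1-\alpha_k)$ into the recursion: $(1-\alpha_k)\mathfrak d(x_k)(1-\tau\mathfrak d(x_k)^\lambda) \le \mathfrak d(x_k)(1 - \tau\mathfrak d(x_k)^\lambda)$, yielding exactly the form of \eqref{eq:orderrec} with $M = \mathfrak d(x)$ and the same $\tau, \lambda$. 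Then Lemma~\ref{lem:orderrec} gives $\limsup_k \mathfrak d(x_k)/\alpha_k^{1/(1+\lambda)} \le (M/\tau)^{1/(1+\lambda)} = (2c^{2\gamma^{-1}}\mathfrak d(x)/c_0)^{\gamma/(2-\gamma)}$, which is precisely the claimed bound.

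**The main obstacle** will be justifying that the contraction inequality of Lemma~\ref{lem:contraction} is applicable at every iterate $x_k$, i.e., that the entire sequence remains in the compact neighborhood of $S$ on which the H\"older error bound \eqref{eq:holder-eb} holds. From the global convergence Theorem~\ref{thm:globalconv} we already know $(x_k)$ is bounded and $\mathfrak d(x_k) \to 0$, so the iterates eventually enter and stay within any prescribed compact neighborhood of $S$; I would invoke boundedness to fix a compact set $K$ containing the tail of $(x_k)$ and apply the error bound there, noting that Lemma~\ref{lem:orderrec}'s $\limsup$ conclusion is insensitive to finitely many initial terms. The two technical hypotheses $c_0 \le 1$ and $c \ge 1$ (hence $\tau \le 1$) are imposed precisely so that $\tau \in (0,1]$ as required by Lemma~\ref{lem:orderrec}; I would verify $\tau = c_0/(2c^{2\gamma^{-1}}) \le 1$ from these. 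A secondary point to check is that the contraction factor $1 - \tau\mathfrak d(x_k)^\lambda$ stays nonnegative (so the estimate $\le \mathfrak d(x_k)$ is legitimate), which again follows once $\mathfrak d(x_k)$ is small enough along the tail.
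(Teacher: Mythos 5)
Your proposal is correct and follows essentially the same route as the paper's own proof: convexity of $\mathfrak d$ applied to the Halpern update, substitution of the contraction from Lemma~\ref{lem:contraction}, absorption of the factor $(1-\alpha_k)\le 1$, and a direct application of Lemma~\ref{lem:orderrec} with $\beta_k=\mathfrak d(x_k)$, $M=\mathfrak d(x)$, $\tau=c_0/(2c^{2\gamma^{-1}})$ and $\lambda=2(\gamma^{-1}-1)$. In fact, you are more careful than the paper on two points it leaves implicit --- confining the iterates to a compact neighborhood of $S$ (via boundedness and $\mathfrak d(x_k)\to0$ from Theorem~\ref{thm:globalconv}, plus the tail-insensitivity of the $\limsup$) and checking $\tau\in(0,1]$ from $c_0\le1$, $c\ge1$.
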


\begin{proof}
    Let $\lambda:=2(\gamma^{-1}-1)$ as in Lemma~\ref{lem:contraction}. From
    \[x_{k+1} = \alpha_k x + (1-\alpha_k) Tx_{k}\]
    and convexity of \(\mathfrak d(\, \cdot \,)\), we obtain
    \[
    \mathfrak d(x_{k+1}) \le \alpha_k \, \mathfrak d(x) + (1-\alpha_k) \, \mathfrak d(Tx_{k}).
    \]
    Now by Lemma \ref{lem:contraction},
    \begin{align*}
            \mathfrak d(x_{k+1}) &\le \alpha_k \, \mathfrak d(x) + (1-\alpha_k) \, \mathfrak d(x_k)\left(1 - \tau\, \mathfrak d(x_k)^{\lambda} \right) \\&\le \alpha_k \, \mathfrak d(x) + \mathfrak d(x_k)\left(1 - \tau\, \mathfrak d(x_k)^{\lambda} \right),
    \end{align*}
    where
    \[
    \tau := \frac{c_0}{2c^{2/\gamma}}.
    \]
    Since \(c_0\le1\) and \(c\ge1\), we have \(\tau\in(0,1]\). Then we are done by Lemma \ref{lem:orderrec} applied to 
    \[
    \beta_k = \mathfrak d(x_k), \quad M = \mathfrak d(x).
    \]
\end{proof}

The distance estimate in Theorem~\ref{thm:rate-distance} quantifies how fast the iterates approach the feasible set \(S\).  
For algorithmic purposes, however, the more relevant quantity is the actual norm error with respect to the best approximation point \(P_S(x)\).  
By combining the distance decay with a quantitative stability estimate for the metric projection, we obtain the following norm-convergence rate for the Halpern sequence, which, to the best of our knowledge, is the first explicit norm-error rate for Halpern iterations.

\begin{theorem}[Rate of convergence]\label{thm:rate-norm}
Under the hypotheses of Theorem~\ref{thm:rate-distance}, the Halpern iterates satisfy
\begin{align*}
        \limsup_k \ \frac{\|x_k - P_S(x)\|}{\alpha_k^{\,\gamma/(4-2\gamma)}} \le \frac{c_0^{\,-\gamma/(4-2\gamma)}}{\left(2 - \frac{1}{2 \gamma^{-1}-1} \cdot L \right)^{1/2}}\, [2 \, c \ \mathfrak d(x)]^{\,1/(2-\gamma)},
\end{align*}
where
\[
L := \displaystyle \limsup_{k\to\infty}\,\left(\frac{1}{\alpha_{k+1}}-\frac{1}{\alpha_k}\right).
\]
In particular, \(\|x_k - P_S(x)\| =\mathcal O\left(\alpha_k^{\,\gamma/(4-2\gamma)}\right)\).
\end{theorem}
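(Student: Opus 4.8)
The plan is to reduce the norm-error rate to the already-established feasibility rate of Theorem~\ref{thm:rate-distance} by tracking the \emph{squared} error $d_k := \|x_k - P_S(x)\|^2$. Write $s^\star := P_S(x)$ and note the basic identity $\|x - s^\star\| = \mathfrak d(x)$. Starting from $x_{k+1}-s^\star = \alpha_k(x-s^\star) + (1-\alpha_k)(Tx_k - s^\star)$, I would isolate $(1-\alpha_k)(Tx_k-s^\star) = (x_{k+1}-s^\star) - \alpha_k(x-s^\star)$, square, and expand, exactly as in the proof of Theorem~\ref{thm:globalconv}, to obtain $\|x_{k+1}-s^\star\|^2 = (1-\alpha_k)^2\|Tx_k-s^\star\|^2 + 2\alpha_k\langle x_{k+1}-s^\star,\,x-s^\star\rangle - \alpha_k^2\|x-s^\star\|^2$. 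Dropping the last (nonpositive) term and using $\|Tx_k-s^\star\|^2 \le \|x_k-s^\star\|^2$ from Assumption~\ref{ass:T} gives the recursion $d_{k+1} \le (1-\alpha_k)^2 d_k + 2\alpha_k\langle x_{k+1}-s^\star,\,x-s^\star\rangle$.

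The crucial quantitative ingredient is to upgrade the qualitative cross-term estimate of the global-convergence proof into a rate. Splitting $x_{k+1}-s^\star = (x_{k+1}-P_S(x_{k+1})) + (P_S(x_{k+1})-s^\star)$, the variational inequality for the projection $s^\star = P_S(x)$ gives $\langle P_S(x_{k+1})-s^\star,\,x-s^\star\rangle \le 0$ since $P_S(x_{k+1})\in S$, while Cauchy--Schwarz together with $\|x-s^\star\| = \mathfrak d(x)$ yields $\langle x_{k+1}-P_S(x_{k+1}),\,x-s^\star\rangle \le \mathfrak d(x_{k+1})\,\mathfrak d(x)$. Hence $d_{k+1} \le (1-\alpha_k)^2 d_k + 2\alpha_k\,\mathfrak d(x_{k+1})\,\mathfrak d(x)$. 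Into the forcing term I would insert the feasibility rate of Theorem~\ref{thm:rate-distance}: writing $p := \gamma/(2-\gamma)$ and $A := \bigl(2c^{2\gamma^{-1}}\mathfrak d(x)/c_0\bigr)^{p}$, we have $\limsup_k \mathfrak d(x_k)/\alpha_k^{p} \le A$, so for every $\eta>0$ and large $k$ the forcing term is at most $2(A+\eta)\mathfrak d(x)\,\alpha_k\,\alpha_{k+1}^{p}$.

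The remaining step is the asymptotic bookkeeping. Dividing the recursion by $\alpha_{k+1}^{p}$ and setting $w_k := d_k/\alpha_k^{p}$, I obtain $w_{k+1} \le (1-\alpha_k)^2\tfrac{\alpha_k^p}{\alpha_{k+1}^p}\,w_k + 2(A+\eta)\mathfrak d(x)\,\alpha_k$. Here Lemma~\ref{lem:limpsupalpha}(iii) is tailor-made for the coefficient: for any $\varepsilon < 2 - pL$, where $L := \limsup_k(\alpha_{k+1}^{-1}-\alpha_k^{-1})$, there is $k_0$ with $(1-\alpha_k)^2\tfrac{\alpha_k^p}{\alpha_{k+1}^p} \le 1-\varepsilon\alpha_k$ for $k\ge k_0$. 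Setting the equilibrium $\ell := 2(A+\eta)\mathfrak d(x)/\varepsilon$, the recursion becomes $w_{k+1} \le (1-\varepsilon\alpha_k)w_k + \varepsilon\alpha_k\ell$, i.e. $w_{k+1}-\ell \le (1-\varepsilon\alpha_k)(w_k-\ell)$; Lemma~\ref{lem:limitcontraction} (with $\nu_k=\varepsilon\alpha_k$, $\sigma_k=0$, using $\sum\alpha_k=\infty$) then gives $\limsup_k w_k \le \ell$. Letting $\eta\downarrow 0$ and $\varepsilon\uparrow(2-pL)$ yields $\limsup_k w_k \le 2A\,\mathfrak d(x)/(2-pL)$, and substituting $A$ and simplifying (using $2\gamma^{-1}p = 2/(2-\gamma)$, $1+p = 2/(2-\gamma)$, and $\tfrac{1}{2\gamma^{-1}-1}=p$) collapses this to $\limsup_k d_k/\alpha_k^{p} \le c_0^{-p}\,[2c\,\mathfrak d(x)]^{2/(2-\gamma)}/(2-pL)$. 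Taking square roots, and noting $p/2 = \gamma/(4-2\gamma)$, produces precisely the claimed bound.

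The main obstacle is not any single computation but the sharp constant tracking in the final step: one must feed the feasibility \emph{limsup} constant $A$ through the $\alpha_{k+1}^{p}$-normalization (relying on $\alpha_k/\alpha_{k+1}\to 1$) and then match the contraction window $\varepsilon < 2-pL$ of Lemma~\ref{lem:limpsupalpha}(iii) against the equilibrium $\ell$, so that the double limit $\eta\downarrow0$, $\varepsilon\uparrow(2-pL)$ reproduces the exact denominator $\bigl(2-\tfrac{1}{2\gamma^{-1}-1}L\bigr)^{1/2}$ and the exponent $1/(2-\gamma)$ on $2c\,\mathfrak d(x)$. The conceptual heart, by contrast, is the clean quantitative cross-term bound $\langle x_{k+1}-s^\star,\,x-s^\star\rangle \le \mathfrak d(x_{k+1})\,\mathfrak d(x)$, which is what links the norm error to the feasibility rate and explains the halving of the exponent from $\gamma/(2-\gamma)$ to $\gamma/(4-2\gamma)$.
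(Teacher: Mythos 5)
Your proposal is correct and follows essentially the same route as the paper's own proof: the same expansion of the Halpern update around $s^\star = P_S(x)$, the same cross-term decomposition $\langle x_{k+1}-s^\star, x-s^\star\rangle \le \|x_{k+1}-P_S(x_{k+1})\|\,\|x-s^\star\| = \mathfrak d(x_{k+1})\,\mathfrak d(x)$ via the projection variational inequality, the same $\alpha_k^{p}$-normalized sequence (your $w_k$ is the paper's $z_k$, your $A+\eta$ its $G$), and the same combination of Lemma~\ref{lem:limpsupalpha}(iii) with Lemma~\ref{lem:limitcontraction} followed by the double limit in $\eta$ and $\varepsilon$ to recover the sharp constant. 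The constant-tracking algebra you outline ($1+p = 2/(2-\gamma)$, $2p/\gamma = 2/(2-\gamma)$, $p = 1/(2\gamma^{-1}-1)$) matches the paper's final simplification exactly.
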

\begin{proof}
Denote 
\[
s^\star := P_S(x), 
\qquad 
s_k := P_S(x_k), 
\qquad 
\lambda:=2(\gamma^{-1}-1), \qquad
p := \frac{1}{1+\lambda}.
\]
By the update rule,
\[
x_{k+1} - s^\star 
= 
\alpha_k (x - s^\star) + (1-\alpha_k)(T x_k - s^\star).
\]
Expanding the norm and using the quasi-nonexpansivity of \(T\), we obtain
\begin{align*}
\|(x_{k+1}-s^\star) - \alpha_k(x-s^\star)\|^2 
&\le (1-\alpha_k)^2 \|x_k - s^\star\|^2,
\end{align*}
which implies
\begin{align}
\|x_{k+1}-s^\star\|^2 
&\le (1-\alpha_k)^2 \|x_k - s^\star\|^2 
   + 2\alpha_k \langle x_{k+1}-s^\star,\, x-s^\star \rangle.\label{firstineqxs}
\end{align}
Since \(s_{k+1}\in S\), the projection property onto a convex set gives 
\[
\langle s_{k+1}-s^\star,\, x-s^\star\rangle \le 0.
\]
Therefore,
\begin{align}
\langle x_{k+1}-s^\star,\, x-s^\star\rangle
&=
\langle x_{k+1}-s_{k+1},\, x-s^\star\rangle
+
\langle s_{k+1}-s^\star,\, x-s^\star\rangle \nonumber\\
&\le
\langle x_{k+1}-s_{k+1},\, x-s^\star\rangle \nonumber\\
&\le
\|x_{k+1}-s_{k+1}\|\,\|x-s^\star\|. \label{csineqok}
\end{align}
Substituting \eqref{csineqok} into \eqref{firstineqxs} yields
\begin{equation}\label{step:mainbound}
\|x_{k+1}-s^\star\|^2
\le
(1-\alpha_k)^2 \|x_k - s^\star\|^2
+ 2\alpha_k\,\|x_{k+1}-s_{k+1}\|\,\|x-s^\star\|.
\end{equation}

\smallskip

\noindent
Define the sequences
\[
G_k = \frac{\|x_{k}-s_{k}\|}{\alpha_{k}^{p}}, \qquad z_k := \frac{\|x_k - s^\star\|^2}{\alpha_k^{p}}.
\]
Using this notation and \eqref{step:mainbound} gives
\[
\|x_{k+1}-s^\star\|^2 
\le 
(1-\alpha_k)^2 \|x_k - s^\star\|^2 
+ 2G_{k+1} \, \alpha_k \, \alpha_{k+1}^{p} \, \|x-s^\star\|. 
\]
Dividing both sides by \(\alpha_{k+1}^{p}\) yields
\begin{equation}\label{eq:znrec}
\frac{\|x_{k+1}-s^\star\|^2}{\alpha_{k+1}^{p}}
\le 
(1-\alpha_k)^2 
  \frac{\alpha_k^{p}}{\alpha_{k+1}^{p}}
  \frac{\|x_k - s^\star\|^2}{\alpha_k^{p}}
+ 
2G_{k+1} \, \alpha_k \|x-s^\star\|.
\end{equation}
Then \eqref{eq:znrec} can be rewritten as
\begin{equation}\label{step:rate-norm1}
    z_{k+1}
\le 
(1-\alpha_k)^2 
  \frac{\alpha_k^{p}}{\alpha_{k+1}^{p}} \,
z_k 
+ 
\alpha_k
\left(
2G_{k+1} \, \|x-s^\star\|\right).
\end{equation}

\smallskip

\noindent
Let 
\[
u := 2 - p \cdot L.
\]
By Lemma \ref{lem:limpsupalpha}~(iii), given \(\varepsilon \in (0, u)\), there exists \(k_0\) such that for all \(k\ge k_0\),
\[
(1-\alpha_k)^2 
  \frac{\alpha_k^{p}}{\alpha_{k+1}^{p}} \le 1-\varepsilon \alpha_k.
\]
Additionally, choose any $G < \infty$ with 
\[
G > \left(\frac{2c^{2 \gamma^{-1}} \ \mathfrak d(x)}{c_0} \right)^{\,1/(1+\lambda)}.
\]
Because of Theorem \ref{thm:rate-distance}, there is some $n_1$ such that $G_k \le G$ for all $k\ge n_1.$ Consequently, for all \(k\ge \max\{k_0, n_1\}\), inequality \eqref{step:rate-norm1} leads to
\begin{align*}
    z_{k+1} 
& \le 
(1-\varepsilon \alpha_k) z_k + \alpha_k 2G \, \|x-s^\star\|
\\& = (1-\varepsilon \alpha_k) z_k + \varepsilon \alpha_k \left(\frac{2G \, \|x-s^\star\|}{\varepsilon} \right),
\end{align*}
which can be rewritten as
\[
\left(z_{k+1} - \frac{2G \, \|x-s^\star\|}{\varepsilon} \right) \le (1-\varepsilon \alpha_k) \left(z_{k} - \frac{2G \, \|x-s^\star\|}{\varepsilon} \right)
\]
Since \(\sum_{k=1}^{+\infty} \varepsilon \alpha_k = \infty,\) Lemma \ref{lem:limitcontraction} applied to
\[
\nu_k = \varepsilon \alpha_k, \qquad d_k = z_{k} - \frac{2G \, \|x-s^\star\|}{\varepsilon}, \qquad \sigma_k = 0
\]
gives
\[
\limsup \, z_{k} - \frac{2G \, \|x-s^\star\|}{\varepsilon} \le 0 \implies \limsup \, z_{k} \le \frac{2G \, \|x-s^\star\|}{\varepsilon}.
\]
Since 
\[
\varepsilon \in (0, u) \qquad \text{and} \qquad G > \left(\frac{2c^{2 \gamma^{-1}} \ \mathfrak d(x)}{c_0} \right)^{\,1/(1+\lambda)}
\] 
where $u$ and $G$ are arbitrary, we conclude that
\[
\limsup \, z_{k} \, \le \, \frac{2 \|x-s^\star\|}{u} \, \left(\frac{2c^{2 \gamma^{-1}} \ \mathfrak d(x)}{c_0} \right)^{\,1/(1+\lambda)}.
\]
From the definition of \((z_k)\), this can be written
\begin{align*}
    \limsup \ \frac{\|x_k - s^\star\|^2}{\alpha_k^{p}} &\le \frac{2 \, \mathfrak d(x)}{2 - p \cdot L} \, \left(\frac{2c^{2 \gamma^{-1}} \ \mathfrak d(x)}{c_0} \right)^{\,1/(1+\lambda)}
    \\&= \frac{1}{2 - p \cdot L} \, \left(\frac{c^{2 \gamma^{-1}}}{c_0} \right)^{\,\gamma/(2-\gamma)} \, [2 \, \mathfrak d(x)]^{\,2/(2-\gamma)}.
\end{align*}
Finally, we arrive at
\begin{align*}
    \limsup \ \frac{\|x_k - s^\star\|}{\alpha_k^{p/2}} \le \frac{c_0^{\,-\gamma/(4-2\gamma)}}{\left(2 - p \cdot L \right)^{1/2}}\, [2 \, c \ \mathfrak d(x)]^{\,1/(2-\gamma)},
\end{align*}
which ends the proof of the theorem.
\end{proof}

In the regime \(\gamma=1\) (linear regularity of the feasible set), the exponent in the bound reduces to \(1/2\), so that the norm error decays essentially like the square root of the distance bound in Theorem~\ref{thm:rate-distance}.  
For \(\gamma<1\), the rate becomes sublinear and interpolates smoothly between these two extremes.  
Thus the H\"older exponent provides a single geometric parameter that governs both the speed at which the iterates approach feasibility and the speed at which they approach the best approximation point.  
From a practical standpoint, this clarifies how the ill-conditioning of the intersection (encoded in \(\gamma\)) degrades the performance of Halpern-type projection methods.

\begin{corollary}[Rate under linear regularity / Slater]\label{cor:rate-norm-gamma1}
Under the hypotheses of Theorem~\ref{thm:rate-distance}, suppose in addition that the H\"older regularity exponent is $\gamma=1$. Then the Halpern iterates satisfy
\[
    \limsup_{k\to\infty} \ 
    \frac{\|x_k - P_S(x)\|}{\alpha_k^{1/2}}
    \;\le\;
    \frac{2c\,\mathfrak d(x)}{\sqrt{c_0}}\,
    \left(
        2
        -
        \limsup_{k\to\infty} \ 
        \left(\frac{1}{\alpha_{k+1}} - \frac{1}{\alpha_k}\right)
    \right)^{-1/2}.
\]
In particular,
\[
    \|x_k - P_S(x)\|
    = \mathcal O\bigl(\alpha_k^{1/2}\bigr).
\]
\end{corollary}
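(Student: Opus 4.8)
The plan is to specialize Theorem~\ref{thm:rate-norm} to the case $\gamma = 1$ by substituting this value into the general bound and simplifying the resulting exponents and constants. Since the corollary claims exactly what Theorem~\ref{thm:rate-norm} gives under the additional hypothesis $\gamma = 1$, essentially no new mathematical content is required---the work is entirely a matter of evaluating the formula.

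First I would record the relevant substitutions. With $\gamma = 1$ the auxiliary exponent becomes $\lambda = 2(\gamma^{-1} - 1) = 0$, so $p = 1/(1+\lambda) = 1$, and the decay exponent $\gamma/(4 - 2\gamma)$ collapses to $1/2$, matching the $\alpha_k^{1/2}$ in the denominator of the corollary's left-hand side. Next I would simplify the factor $2c^{2\gamma^{-1}} = 2c^2$ and note that the bracketed term $[2\,c\,\mathfrak d(x)]^{1/(2-\gamma)}$ becomes simply $2c\,\mathfrak d(x)$ when $\gamma = 1$, since the exponent $1/(2-\gamma)$ equals $1$.

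Then I would handle the denominator. In Theorem~\ref{thm:rate-norm} it reads
\[
\left(2 - \frac{1}{2\gamma^{-1} - 1}\cdot \limsup_{k\to\infty}\left(\frac{1}{\alpha_{k+1}} - \frac{1}{\alpha_k}\right)\right)^{1/2},
\]
and at $\gamma = 1$ the coefficient $\frac{1}{2\gamma^{-1}-1} = \frac{1}{2\cdot 1 - 1} = 1$, so this reduces to
\[
\left(2 - \limsup_{k\to\infty}\left(\frac{1}{\alpha_{k+1}} - \frac{1}{\alpha_k}\right)\right)^{1/2},
\]
which is exactly the denominator appearing in the corollary. Collecting the constant factors $c_0^{-\gamma/(4-2\gamma)} = c_0^{-1/2}$ together with $2c\,\mathfrak d(x)$ yields the prefactor $\tfrac{2c\,\mathfrak d(x)}{\sqrt{c_0}}$, completing the identification with the stated bound.

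The main (and only) obstacle here is purely bookkeeping: one must track each exponent substitution carefully, confirming that $\gamma/(4-2\gamma)$, $1/(2-\gamma)$, and the coefficient $1/(2\gamma^{-1}-1)$ all evaluate correctly at $\gamma = 1$, and then reassemble the constants in the form displayed. Since all hypotheses of Theorem~\ref{thm:rate-distance} (hence of Theorem~\ref{thm:rate-norm}) are assumed, the $\mathcal{O}(\alpha_k^{1/2})$ conclusion follows immediately once the $\limsup$ bound is established, so the proof is a one-line invocation of the preceding theorem followed by the elementary simplification above.
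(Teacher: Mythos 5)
Your proposal is correct and is exactly the paper's intended argument: the paper states this corollary without proof as an immediate specialization of Theorem~\ref{thm:rate-norm}, and your substitutions ($\gamma/(4-2\gamma)=1/2$, $1/(2\gamma^{-1}-1)=1$, $[2c\,\mathfrak d(x)]^{1/(2-\gamma)}=2c\,\mathfrak d(x)$, $c_0^{-\gamma/(4-2\gamma)}=c_0^{-1/2}$) all evaluate correctly and reassemble into the stated bound.
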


\smallskip
\noindent
The case $\gamma=1$ corresponds to a {linear} error bound
\[
    \mathfrak d(z)
    \;\le\;
    c\,\delta(z)
    \qquad \text{for all $z$ in a neighbourhood of $S$,}
\]
i.e., to bounded linear regularity / metric subregularity of the feasibility mapping.
For intersections of convex sets, this property is guaranteed under standard Slater-type
interiority assumptions. For example, if $S$ has nonempty interior then the H\"older regularity condition holds with exponent $\gamma=1$ and some constant $c>0$.

In this regime, Corollary~\ref{cor:rate-norm-gamma1} shows that the best-approximation
error $\|x_k - P_S(x)\|$ decays like $\alpha_k^{1/2}$. In particular, for the classical
Halpern stepsize $\alpha_k = 1/(k+1)$, the result gives the quantitative rate
\[
    \|x_k - P_S(x)\| = \mathcal O \bigl(k^{-1/2}\bigr),
\]
while the feasibility gap satisfies $\mathfrak d(x_k) = \mathcal O(\alpha_k)$ under
Theorem~\ref{thm:rate-distance}. Thus in the ``well-conditioned'' Slater case ($\gamma=1$) we recover an $\mathcal{O}(1/k)$ decay in the distance to the feasible set and an $\mathcal{O}(1/\sqrt{k})$ decay for the best approximation error, with fully explicit constants in terms of $c_0$, $c$, $\mathfrak d(x)$ and the asymptotic behaviour of $(\alpha_k)$.

\begin{corollary}[Complexity for harmonic stepsizes]\label{cor:harmonic-complexity}
Under the assumptions of Theorem~\ref{thm:rate-norm}, suppose in addition that
\[
\alpha_k = \frac{1}{k}, \qquad k \ge 1.
\]
Then the Halpern iterates satisfy
\[
\|x_k - P_S(x)\|
=
\mathcal O\bigl(k^{-\gamma/(4-2\gamma)}\bigr).
\]
Consequently, for every \(\varepsilon>0\), it suffices to take
\[
N
=
\mathcal O\bigl(\varepsilon^{-(4\gamma^{-1}-2)}\bigr)
\]
iterations to guarantee \(\|x_N - P_S(x)\|\le\varepsilon\).
\end{corollary}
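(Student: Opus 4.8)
The plan is to simply specialize Theorem \ref{thm:rate-norm} to the harmonic stepsize $\alpha_k = 1/k$ and then invert the resulting rate to obtain an iteration-complexity bound. First I would verify that $\alpha_k = 1/k$ satisfies Assumption \ref{ass:alphan}, which is the only hypothesis needed to invoke Theorem \ref{thm:rate-norm}. Conditions (i) $\alpha_k \in (0,1)$ and (ii) $\alpha_k \to 0$ are immediate. For (iii), note that $\frac{1}{\alpha_{k+1}} - \frac{1}{\alpha_k} = (k+1) - k = 1 < 2$, so the $\limsup$ equals $1$, comfortably below $2$. For (iv), $\frac{\alpha_k}{\alpha_{k+1}} = \frac{k+1}{k} \to 1$. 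Thus all four conditions hold.

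Next I would substitute into the conclusion of Theorem \ref{thm:rate-norm}. Since $\alpha_k^{\gamma/(4-2\gamma)} = k^{-\gamma/(4-2\gamma)}$, the theorem gives directly
\[
\|x_k - P_S(x)\| = \mathcal O\bigl(k^{-\gamma/(4-2\gamma)}\bigr),
\]
where the hidden constant is the explicit bound from Theorem \ref{thm:rate-norm} evaluated at $\limsup_k\bigl(\frac{1}{\alpha_{k+1}} - \frac{1}{\alpha_k}\bigr) = 1$; this establishes the first claim.

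For the complexity statement I would invert this rate. To guarantee $\|x_k - P_S(x)\| \le \varepsilon$, it suffices that $k^{-\gamma/(4-2\gamma)} \lesssim \varepsilon$, i.e. $k \gtrsim \varepsilon^{-(4-2\gamma)/\gamma}$. Writing the exponent as $\frac{4-2\gamma}{\gamma} = 4\gamma^{-1} - 2$ matches the stated form, giving
\[
N = \mathcal O\bigl(\varepsilon^{-(4\gamma^{-1}-2)}\bigr).
\]
Since the whole argument is a direct substitution followed by an algebraic inversion, there is essentially no obstacle here; the only point requiring a moment's care is confirming the four stepsize conditions (particularly the strict inequality in (iii)) and correctly rewriting the exponent $(4-2\gamma)/\gamma$ in the $4\gamma^{-1}-2$ form used in the statement.
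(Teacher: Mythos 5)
Your proposal is correct and follows essentially the same route as the paper's own proof: specialize Theorem~\ref{thm:rate-norm} to $\alpha_k = 1/k$ to get the rate $\mathcal O\bigl(k^{-\gamma/(4-2\gamma)}\bigr)$, then invert $C\,N^{-\gamma/(4-2\gamma)} \le \varepsilon$ and rewrite the exponent $(4-2\gamma)/\gamma$ as $4\gamma^{-1}-2$. Your explicit check that $\alpha_k = 1/k$ satisfies Assumption~\ref{ass:alphan} is a small addition the paper leaves implicit (it is noted earlier for the family $k^{-a}$), but it does not change the argument.
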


\begin{proof}
Theorem~\ref{thm:rate-norm} shows that there exists a constant \(C>0\) such
that, for all \(k\) large enough,
\[
\|x_k - P_S(x)\|
\le
C\,\alpha_k^{\,\gamma/(4-2\gamma)}
=
C\,k^{-\gamma/(4-2\gamma)}.
\]
This gives
\[
\|x_k - P_S(x)\|
=
\mathcal O\bigl(k^{-\gamma/(4-2\gamma)}\bigr).
\]
To guarantee \(\|x_N - P_S(x)\|\le\varepsilon\), it is enough to choose \(N\)
such that
\[
C\,N^{-\gamma/(4-2\gamma)} \le \varepsilon,
\]
or equivalently
\[
N \ge \left(\frac{C}{\varepsilon}\right)^{4\gamma^{-1}-2}.
\]
Thus it suffices to take
\[
N
=
\mathcal O\bigl(\varepsilon^{-(4\gamma^{-1}-2)}\bigr)
\]
iterations.
\end{proof}

Iteration-complexity statements expressed through $\alpha_k$ are the natural form for Halpern's method,
since the stepsizes play the same role as smoothness constants in classical first-order optimization.  
Specializing to $\alpha_k = 1/k$ recovers the concrete $\mathcal O(\varepsilon^{-(4\gamma^{-1}-2)})$ rate,
but other schedules (e.g., $\alpha_k = 1/k^a$ with $a\in(0,1]$) can be handled identically.

\begin{remark}[Best choice in the harmonic family]\label{rem:best-harmonic}
Consider harmonic stepsizes
$\alpha_k$ given by
\[
    \alpha_k = \frac{1}{\mu k}, \qquad \mu\ge1.
\]
Then \(\alpha_k\in(0,1]\) for all \(k\ge1\), and
\[
    \frac{1}{\alpha_k} = \mu k
    \quad\Longrightarrow\quad
    \limsup_{k\to\infty} \ \Bigl(\tfrac{1}{\alpha_{k+1}}-\tfrac{1}{\alpha_k}\Bigr)=\mu.
\]
Recall that
\[
    p := \frac{1}{2\gamma^{-1}-1}
    \quad\Longrightarrow\quad
    \frac{\gamma}{4-2\gamma} = \frac{p}{2}.
\]
Hence \(\alpha_k^{\,\gamma/(4-2\gamma)}=\alpha_k^{p/2}=(1/(\mu k))^{p/2}=\mu^{-p/2}k^{-p/2}\), and
Theorem~\ref{thm:rate-norm} implies that, up to a constant independent of \(\mu\),
\[
    \limsup_{k\to\infty} \ k^{p/2}\,\|x_k-P_S(x)\|
    \;\lesssim\;
    \Phi(\mu),
    \qquad
    \Phi(\mu):=\frac{1}{\mu^{p/2}\sqrt{2-p\mu}},
    \quad 1\le\mu < 2.
\]
To minimize \(\Phi\), differentiate \(\log\Phi(\mu)\):
\[
    \frac{d}{d\mu}\log\Phi(\mu_* )
    = -\frac{p}{2\mu_* } + \frac{p/2}{2-p\mu_* } = 0 
    \quad \implies \quad 
    \mu_* = \frac{2}{p+1} = 2-\gamma.
\]
Since \(\gamma\in(0,1]\), we have \(\mu_*\in[1,2)\), so this choice is admissible.
Thus, within the harmonic family \(\alpha_k = 1/(\mu k)\), the choice
\[
    \alpha_k = \frac{1}{(2-\gamma)k}
\]
minimizes the asymptotic constant in the rate bound of Theorem~\ref{thm:rate-norm}.
\end{remark}

Although the convergence and rate results above are stated for a fixed operator~$T$, the arguments apply without modification to Halpern iterations driven by a sequence $(T_k)$, namely
\[
x_{k+1} = \alpha_k x + (1-\alpha_k) T_k x_k.
\]
If all $T_k$ share the same feasible set $S$ and satisfy the local decrease property~\eqref{eq:assT} with a uniform constant $c_0>0$, then Lemma~\ref{lem:contraction}, Lemma~\ref{lem:orderrec}, and the entire rate analysis remain valid.  
Thus the results in this section hold verbatim for operator sequences as long as the decrease inequality is uniform across~$k$.

\medskip
These findings reinforce that Halpern-type anchored schemes possess a subtle but powerful mechanism for convergence: the static anchor $x_0$ enforces directional consistency, while the stepsize and geometry jointly determine the rate. Unlike acceleration via inertial momentum, as seen in heavy-ball methods or inertial quasi-nonexpansive schemes~\cite{Shehu2022Inertial}, our results suggest that anchoring combined with H\"older-type error structure yields comparable complexity with better theoretical stability. In particular, for projection-type operators lacking global nonexpansiveness, Halpern's iteration not only ensures strong convergence but also admits rate bounds tightly governed by local regularity. This positions Halpern-type iterations as a geometrically robust alternative to momentum-based acceleration, especially in problems where feasible-set conditioning is poor or unknown.

\section{Rates of convergence for special instances of Halpern-type iterations}\label{sec:examples}

We consider the Halpern-type iterative scheme
\begin{equation}\label{halpernsecex}
x_{k+1} = \alpha_k x + (1-\alpha_k) T x_k,
\end{equation}
\[
\alpha_k \in (0,1], \qquad \alpha_k \to 0, \qquad  \limsup_{k\to\infty}\,\left(\frac{1}{\alpha_{k+1}}-\frac{1}{\alpha_k}\right)<2, \qquad \lim_{k\to\infty} \ \frac{\alpha_{k}}{\alpha_{k+1}} = 1,
\]
where \(x\) is a fixed anchor point and \(T\) denotes one of the projection-based operators described below.  
Depending on the choice of \(T\), we obtain several variants of the Halpern projection method.
We consider in this section
6 different choices for $T$
and we show that these operators satisfy Assumption \ref{ass:T}. Therefore for the Halpern iterations 
\eqref{halpernsecex} 
combined with these operators, the convergence Theorem 
\ref{thm:globalconv}
and the rates of convergence
given by Theorems
\ref{thm:rate-distance} and
\ref{thm:rate-norm} 
apply.

\subsection{Method of Alternating Projections}\label{secmap}

For closed convex sets \(U_1,\ldots,U_m \subset \RR^n\), the method of alternating projections (MAP) is defined by
\[
T_{\mathrm{MAP}}x := P_{U_m} P_{U_{m-1}} \cdots P_{U_1}x.
\]
The corresponding Halpern iteration is
\[
x_{k+1}
=
\alpha_k x
+
(1-\alpha_k)\,T_{\mathrm{MAP}}x_k.
\]

\paragraph{Verification of Assumption~\ref{ass:T}.}
Let \(p_0 := x\) and \(p_i := P_{U_i}p_{i-1}\) for \(i=1,\ldots,m\), so that \(p_m = T_{\mathrm{MAP}}x\) and \(p_i \in U_i\).  
Given any \(s\in S = \bigcap_{i=1}^m U_i\), the projection identity from Proposition \ref{prop:pythag} gives
\[
\|p_{i-1}-s\|^2 \;\ge\; \|p_i - s\|^2 + \|p_{i-1}-p_i\|^2
\qquad (i=1,\ldots,m)
\]
and yields, after summing over all \(i\),
\[
\|x-s\|^2 - \|T_{\mathrm{MAP}}x - s\|^2
\;\ge\;
\sum_{i=1}^m \|p_{i-1}-p_i\|^2.
\]
For any \(k\in\{1,\ldots,m\}\), the Cauchy--Schwarz inequality applied to the
numbers \(\|p_{i-1}-p_i\|\) gives
\begin{equation}\label{firstmap}
\sum_{i=1}^k \|p_{i-1}-p_i\|^2
\;\ge\;
\frac{1}{k}\Big(\sum_{i=1}^k \|p_{i-1}-p_i\|\Big)^2.
\end{equation}
By the triangle inequality,
for $k \leq m$, we have
\begin{equation}\label{secondmap}
\sum_{i=1}^k \|p_{i-1}-p_i\|
\;\ge\;
\Big\|\sum_{i=1}^k (p_{i-1}-p_i)\Big\|
=
\|p_0 - p_k\|
=
\|x-p_k\|.
\end{equation}
Combining \eqref{firstmap} and \eqref{secondmap} yields
\[
\sum_{i=1}^k \|p_{i-1}-p_i\|^2
\;\ge\;
\frac{1}{k}\|x-p_k\|^2
\;\ge\;
\frac{1}{m}\|x-p_k\|^2,
\]
since \(k\le m\).  
Because the left-hand side increases with \(k\), this implies
\[
\sum_{i=1}^m \|p_{i-1}-p_i\|^2
\;\ge\;
\frac{1}{m}\max_{1\le k\le m} \|x-p_k\|^2.
\]
Since \(p_k\in U_k\), we have \(\mathrm{dist}(x,U_k)\le \|x-p_k\|\) for each \(k\), hence
\[
\delta(x)^2
=
\max_{1\le i\le m}\mathrm{dist}(x,U_i)^2
\;\le\;
\max_{1\le k\le m}\|x-p_k\|^2.
\]
Combining the inequalities above gives the decrease estimate
\[
\|T_{\mathrm{MAP}}x - s\|^2
\;\le\;
\|x-s\|^2
\;-\;
\frac{1}{m}\,\delta(x)^2,
\qquad
\forall\,s\in S.
\]
Thus MAP satisfies Assumption~\ref{ass:T} with constant \(c_0 = 1/m\).  

\subsection{Cimmino's Method}\label{seccim}

Cimmino's method replaces sequential projections by their arithmetic average.  
For closed convex sets \(U_1,\ldots,U_m\subset\RR^n\), define
\[
T_{\mathrm{Cim}}x
:= 
\frac{1}{m}\sum_{k=1}^m P_{U_k}x,
\qquad
p_k := P_{U_k}x.
\]
The Halpern iteration becomes
\[
x_{k+1}
=
\alpha_k x + (1-\alpha_k)T_{\mathrm{Cim}}x_k.
\]
\paragraph{Verification of Assumption~\ref{ass:T}.}
Fix \(s\in S = \bigcap_{k=1}^m U_k\).  
Since \(s\in U_k\) and \(p_k = P_{U_k}x\), the projection identity from Proposition \ref{prop:pythag} gives
\[
\|x-s\|^2 \;\ge\; \|p_k - s\|^2 + \|x - p_k\|^2
\qquad(k=1,\ldots,m).
\]
Averaging over \(k\) yields
\[
\|x-s\|^2
\;\ge\;
\frac{1}{m}\sum_{k=1}^m \|p_k - s\|^2
\;+\;
\frac{1}{m}\sum_{k=1}^m \|x-p_k\|^2.
\]
By convexity of the squared norm,
\[
\frac{1}{m}\sum_{k=1}^m \|p_k - s\|^2
\;\ge\;
\biggl\|\frac{1}{m}\sum_{k=1}^m p_k - s\biggr\|^2
=
\|T_{\mathrm{Cim}}x - s\|^2.
\]
Thus
\[
\|x-s\|^2 - \|T_{\mathrm{Cim}}x - s\|^2
\;\ge\;
\frac{1}{m}\sum_{k=1}^m \|x-p_k\|^2.
\]
Since each \(p_k\in U_k\), we have
\[
\mathrm{dist}(x,U_k) \le \|x-p_k\|,
\qquad
\delta(x) = \max_{1\le k\le m}\mathrm{dist}(x,U_k),
\]
and therefore
\[
\frac{1}{m}\sum_{k=1}^m \|x-p_k\|^2
\;\ge\;
\frac{1}{m}\max_{1\le k\le m}\|x-p_k\|^2
\;\ge\;
\frac{1}{m}\,\delta(x)^2.
\]
We conclude that Cimmino's operator satisfies
\[
\|T_{\mathrm{Cim}}x - s\|^2
\;\le\;
\|x-s\|^2
\;-\;
\frac{1}{m}\,\delta(x)^2,
\qquad
\forall\,s\in S,\ \forall\,x\in\RR^n,
\]
that is, Assumption~\ref{ass:T} holds with constant \(c_0 = 1/m\).

\subsection{Parallel Polyhedral Projection Method (3PM)}
\label{3pm}

Given closed convex sets \(U_1,\ldots,U_m \subset \RR^n\), for each \(U_i\), define the projection \(P_{U_i}x.\) The supporting halfspace at \(P_{U_i}x\) is
\[
H_i(x)
:=
\bigl\{
y \in \RR^n : \langle x - P_{U_i}x,\, y - P_{U_i}x \rangle \le 0
\bigr\},
\]
which always satisfies \(U_i \subset H_i(x)\).  
The polyhedral outer approximation of the feasible region \(S=\cap_{i=1}^m U_i\) is
\[
\Omega(x) := \bigcap_{i=1}^m H_i(x).
\]
The 3PM operator is the projection of \(x\) onto this outer approximation:
\[
T_{\mathrm{3PM}}x
:=
P_{\Omega(x)}x
=
\argmin_{y\in\Omega(x)} \tfrac12\|y-x\|^2.
\]

\paragraph{Verification of Assumption~\ref{ass:T}.}
A key geometric feature of 3PM is that  
\[
S \subset H_i(x)\quad\forall\,i,
\qquad\text{hence } S \subset \Omega(x)\quad\forall\,x.
\]
Thus every feasible point \(s\in S\) belongs to \(\Omega(x)\), and by the projection inequality,
\[
\langle x - T_{\mathrm{3PM}}x,\, s - T_{\mathrm{3PM}}x \rangle \le 0.
\]
Expanding the square then gives the fundamental descent relation
\begin{equation}\label{eq:3pm-Pythag}
\|x-s\|^2
\;\ge\;
\|x - T_{\mathrm{3PM}}x\|^2 
\,+\,
\|T_{\mathrm{3PM}}x-s\|^2,
\qquad \forall\, s\in S.
\end{equation}

\medskip

\noindent
Next, we relate the step length \(\|x - T_{\mathrm{3PM}}(x)\|\) to the maximal violation \(\delta(x)\).  
Because \(U_i \subset H_i(x)\), the projection onto the intersection satisfies
\[
\|x - T_{\mathrm{3PM}}x\|
=
\mathrm{dist}(x,\Omega(x))
\;\ge\;
\mathrm{dist}(x,H_i(x))
\qquad\forall\, i.
\]
Since \(P_{U_i}x\in U_i\subset H_i(x)\),
\[
\mathrm{dist}(x,H_i(x))
=
\|x - P_{U_i}x\|
=
\mathrm{dist}(x,U_i).
\]
Taking the maximum over \(i\) yields
\[
\|x - T_{\mathrm{3PM}}x\|
\;\ge\;
\delta(x)
:=
\max_{1\le i\le m}\mathrm{dist}(x,U_i).
\]
Substituting this into \eqref{eq:3pm-Pythag} gives, for every \(s\in S\),
\[
\|x-s\|^2
\;\ge\;
\delta(x)^2 + \|T_{\mathrm{3PM}}x-s\|^2,
\]
or equivalently,
\[
\|T_{\mathrm{3PM}}x-s\|^2
\;\le\;
\|x-s\|^2
-
\delta(x)^2.
\]
Thus 3PM satisfies the local decrease condition of Assumption~\ref{ass:T} with the constant
\[
c_0 = 1.
\]

\subsection{Approximate Parallel Polyhedral Projection Method (A3PM)}\label{a3pm}

Given an accuracy parameter \(\varepsilon\in[0,1)\), each exact projection
\(P_{U_i}x\) is replaced by an approximate projection
\(\widehat P_{U_i}x := \widehat P_{U_i}(x,\varepsilon)\) satisfying
\[
\mathrm{dist}(\widehat P_{U_i}x,U_i)
\le
\varepsilon\,\mathrm{dist}(x,U_i),
\qquad
U_i \subseteq
\Bigl\{
y : \langle y-\widehat P_{U_i}x,\, x-\widehat P_{U_i}x\rangle \le 0
\Bigr\}.
\]
Each \(\widehat P_{U_i}x\) defines an approximate supporting halfspace
\[
\widehat H_i(x)
:=
\bigl\{
y : \langle x-\widehat P_{U_i}x,\, y-\widehat P_{U_i}x\rangle \le 0
\bigr\},
\]
and the approximate polyhedral outer approximation
\[
\widehat\Omega(x)
:=
\bigcap_{i=1}^m \widehat H_i(x).
\]
The A3PM operator is the (approximate) projection of \(x\) onto
\(\widehat\Omega(x)\):
\[
T_{\mathrm{A3PM}}x
:=
\widehat P_{\widehat\Omega(x)}(x,\varepsilon).
\]
The corresponding Halpern update becomes
\[
x_{k+1}
=
\alpha_k x
+
(1-\alpha_k)\,T_{\mathrm{A3PM}}x_k.
\]
\paragraph{Verification of Assumption~\ref{ass:T}.}
The geometry is analogous to 3PM: since \(U_i\subseteq \widehat H_i(x)\) for every \(i\),  
\[
S=\bigcap_{i=1}^m U_i \subseteq \widehat\Omega(x),
\qquad \forall\, x\in\RR^n.
\]
Thus every feasible point \(s\in S\) lies in \(\widehat\Omega(x)\), and the approximate projection inequality gives
\[
\langle x - T_{\mathrm{A3PM}}x,\, s - T_{\mathrm{A3PM}}x\rangle \le 0.
\]
Expanding the square yields the same Pythagorean decrease as in the exact 3PM case:
\begin{equation}\label{eq:a3pm-fejer}
\|x-s\|^2
\;\ge\;
\|x - T_{\mathrm{A3PM}}x\|^2
+
\|T_{\mathrm{A3PM}}x-s\|^2,
\qquad \forall\,s\in S.
\end{equation}
To relate the step size to the violation \(\delta(x)\), we use the approximate-projection bound  
\cite[Lemma 3.2]{Barros3pm}:
\[
\|x-\widehat P_X(x,\varepsilon)\|
\;\ge\;
(1-\varepsilon)\,\mathrm{dist}(x,X)
\qquad \forall \; x \in \mathbb{R}^n, \, X \subset \mathbb{R}^n.
\]
Applying this with \(X=\widehat\Omega(x)\) gives
\[
\|x - T_{\mathrm{A3PM}}x\|
\;\ge\;
(1-\varepsilon)\,\mathrm{dist}(x,\widehat\Omega(x)).
\]
Since \(U_i\subseteq \widehat H_i(x)\), 
$\widehat\Omega(x)\subseteq \widehat H_i(x)\), and  
\(\widehat P_{U_i}x\in\widehat H_i(x)\), we have
\[
\mathrm{dist}(x,\widehat\Omega(x))
\;\ge\;
\mathrm{dist}(x,\widehat H_i(x))
=
\|x-\widehat P_{U_i}x\|
\qquad\forall \ i.
\]
A second application of \cite[Lemma 3.2]{Barros3pm} with \(X=U_i\) yields
\[
\|x - \widehat P_{U_i}x\|
\;\ge\;
(1-\varepsilon)\,\mathrm{dist}(x,U_i),
\qquad\forall \ i,
\]
and therefore
\[
\mathrm{dist}(x,\widehat\Omega(x))
\;\ge\;
(1-\varepsilon)\,\delta(x).
\]
Combining with the bound for \(\|x - T_{\mathrm{A3PM}}x\|\) gives
\[
\|x - T_{\mathrm{A3PM}}x\|
\;\ge\;
(1-\varepsilon)^2\,\delta(x).
\]
Substituting this into \eqref{eq:a3pm-fejer} yields, for every \(s\in S\),
\[
\|T_{\mathrm{A3PM}}x-s\|^2
\;\le\;
\|x-s\|^2
-
(1-\varepsilon)^4\,\delta(x)^2.
\]
Thus A3PM satisfies the decrease property of Assumption~\ref{ass:T} with
\[
c_0 = (1-\varepsilon)^4.
\]

\subsection{Successive Centralized CRM (SCCRM)}\label{sccrm}

We recall the SCCRM operator introduced in \cite{succccrm}.  
For convex sets \(A,B \subset \mathbb{R}^n\), define
\[
Z_{A,B} = P_A \circ P_B, 
\qquad
\tilde{Z}_{A,B} = \tfrac{1}{2}(P_A + P_B), 
\qquad
\bar{Z}_{A,B} = \tilde{Z}_{A,B} \circ Z_{A,B}.
\]
Let \(R_A = 2P_A - I\) and \(R_B = 2P_B - I\), and denote by
\[
\mathcal{C}_{A,B}(x)
=
\operatorname{circ}\left(x, R_A(x), R_B(x)\right)
\]
the circumcenter of the three points \(x\), \(R_A(x)\), and \(R_B(x)\).  
The basic two-set SCCRM operator is then
\[
T_{A,B}(x)
=
\mathcal{C}_{A,B} \bigl(\bar{Z}_{A,B}(x)\bigr).
\]
At iteration \(k\), SCCRM selects a pair of sets
\((U_{\ell(k)},U_{r(k)})\) according to control sequences \(\ell(k)\) and \(r(k)\), for instance 
\[ \ell(k)=1,2,\ldots,m,1,2,\ldots, \qquad r(k)=2,3,\ldots,m,1,2,\ldots, \]
and performs
\[
x_{k+1}
=
T_{U_{r(k)},U_{\ell(k)}}x_k.
\]
Within the Halpern scheme \eqref{eq:halpern}, a full cycle of
\(m-1\) such two-set updates defines the operator
\[
T_{\mathrm{SCCRM}}
:=
T_{U_{r(m-1)},U_{\ell(m-1)}} \circ \cdots
\circ T_{U_{r(1)},U_{\ell(1)}},
\qquad
x_{k+1}
=
\alpha_k x + (1-\alpha_k)\,T_{\mathrm{SCCRM}}x_k.
\]

\paragraph{Verification of Assumption~\ref{ass:T}.}
Let \(s\in S=\bigcap_i U_i\). Define
\[
y_0:=x,
\qquad 
y_j:=T_{U_{j+1},U_j}y_{j-1},
\qquad j=1,\ldots,m-1,
\]
so that \(y_{m-1}=T_{\mathrm{SCCRM}}x\). For each two-set SCCRM step, let \(a_j\) and \(b_j\) denote the two successive
projection points, and let \(c_j\) denote the centralized point from which the
circumcenter is computed:
\[
a_j:=P_{U_j}y_{j-1},
\qquad
b_j:=P_{U_{j+1}}a_j,
\qquad
c_j:=\bar Z_{U_{j+1},U_j}(y_{j-1}).
\]
Set
\[
\Delta_j:=\|y_{j-1}-s\|^2-\|y_j-s\|^2 .
\]
Since \(s\in U_j\cap U_{j+1}\), the projection identity from
Proposition~\ref{prop:pythag}, applied first to \(U_j\) and then to
\(U_{j+1}\), gives
\[
\|y_{j-1}-s\|^2
\ge
\|b_j-s\|^2
+
\|y_{j-1}-a_j\|^2
+
\|a_j-b_j\|^2 .
\]
Moreover, by Lemma~2.4 of \cite{BehlingBelloCruzIusemSantos2024},
\[
\|b_j-s\|^2-\|c_j-s\|^2
\ge
\frac12\|b_j-P_{U_j}b_j\|^2
=
2\|b_j-c_j\|^2,
\]
and, by Lemma~2.5 of \cite{BehlingBelloCruzIusemSantos2024},
\[
\|y_j-s\|^2
\le
\|c_j-s\|^2-\|c_j-y_j\|^2.
\]
Combining these inequalities yields
\begin{equation}\label{eq:sccrm-residuals}
\Delta_j
\ge
\|y_{j-1}-a_j\|^2
+
\|a_j-b_j\|^2
+
\|b_j-c_j\|^2
+
\|c_j-y_j\|^2.
\end{equation}
We now relate the decrease to \(\delta(x)\). Consider the polygonal path
\begin{align*}
    x=y_0\to a_1\to b_1\to c_1\to y_1
\to a_2\to b_2\to c_2\\\to y_2
\to\cdots\to a_{m-1}\to b_{m-1}\to c_{m-1}\to y_{m-1}.
\end{align*}
This path visits all sets: \(a_1\in U_1\), and \(b_{i-1}\in U_i\) for
\(i=2,\ldots,m\). Hence, by the triangle inequality,
\[
\delta(x)
=
\max_{1\le i\le m}\dist(x,U_i)
\le
\sum_{j=1}^{m-1}
\left(
\|y_{j-1}-a_j\|
+
\|a_j-b_j\|
+
\|b_j-c_j\|
+
\|c_j-y_j\|
\right).
\]
By Cauchy--Schwarz and \eqref{eq:sccrm-residuals},
\begin{align*}
    \delta(x)^2
&\le
4(m-1)
\sum_{j=1}^{m-1}
\left(
\|y_{j-1}-a_j\|^2
+
\|a_j-b_j\|^2
+
\|b_j-c_j\|^2
+
\|c_j-y_j\|^2
\right)
\\&\le
4(m-1)\sum_{j=1}^{m-1}\Delta_j .
\end{align*}
Finally, telescoping gives
\[
\sum_{j=1}^{m-1}\Delta_j
=
\|x-s\|^2-\|T_{\mathrm{SCCRM}}x-s\|^2.
\]
Therefore,
\[
\|T_{\mathrm{SCCRM}}x-s\|^2
\le
\|x-s\|^2
-
\frac{1}{4(m-1)}\,\delta(x)^2.
\]
Thus SCCRM satisfies Assumption~\ref{ass:T} with \(c_0=1/(4(m-1))\).

\subsection{CRM in product space (CRM)}\label{crm}

We use the standard product-space reformulation of the feasibility problem
\(S=\bigcap_{i=1}^m U_i\).
Fix the Halpern anchor \(x\in\RR^n\) and define
\[
z_0:=(x,x,\ldots,x)\in\RR^{nm}.
\]
Let
\[
W:=U_1\times U_2\times\cdots\times U_m
\subseteq \RR^{nm},
\qquad
D:=\{(u,u,\ldots,u)\in\RR^{nm}: u\in\RR^n\}.
\]
Then \(W\cap D=\{(s,\ldots,s): s\in S\}\).

For \(z=(x^{(1)},x^{(2)},\ldots,x^{(m)})\in\RR^{nm}\), the projection onto \(D\) is
\[
P_D(z)=(\bar{x},\bar{x},\ldots,\bar{x}),
\qquad
\bar{x}:=\frac{1}{m}\sum_{i=1}^m x^{(i)},
\]
and the projection onto \(W\) is
\[
P_W(z)=\bigl(P_{U_1}(x^{(1)}),\,P_{U_2}(x^{(2)}),\,\ldots,\,P_{U_m}(x^{(m)})\bigr).
\]
Define the reflectors \(R_W:=2P_W-I\) and \(R_D:=2P_D-I\). The CRM operator in the
product space is
\[
T_{\mathrm{CRM}}(z)
:=
\operatorname{circ}\bigl(z,\;R_W(z),\;R_D(R_W(z))\bigr).
\]
The corresponding Halpern update in the product space is
\[
z_{k+1}
=
\alpha_k z_0 + (1-\alpha_k)\,T_{\mathrm{CRM}}z_k.
\]

We note that \(z_k\in D\) for all \(k\). Indeed, \(z_0\in D\) by construction and
\cite[Lemma~3]{crmprod} (with \(K=W\) and \(U=D\)) ensures that \(T_{\mathrm{CRM}}(z)\in D\)
whenever \(z\in D\). Since \(D\) is affine (hence convex), it follows that
\(\alpha_k z_0+(1-\alpha_k)T_{\mathrm{CRM}}z_k\in D\) whenever \(z_k\in D\), proving the claim.

When an iterate in \(\RR^n\) is needed, we recover it from the diagonal:
\[
x_k := (P_D z_k)^{(1)}=\frac{1}{m}\sum_{i=1}^m x_k^{(i)},
\quad\text{where } z_k=\left(x_k^{(1)},\ldots,x_k^{(m)}\right).
\]

\paragraph{Verification of Assumption~\ref{ass:T}.}
Let \(z\in D\). Define
\[
C(z):=\operatorname{circ} \bigl(z,\;R_W(z),\;R_D(R_W(z))\bigr)
= T_{\mathrm{CRM}}(z).
\]
We apply \cite[Lemma~3]{crmprod} with \(K=W\) and \(U=D\). Since \(W\cap D\neq\emptyset\),
\(C(z)\) is well-defined for every \(z\in D\), satisfies \(C(z)\in D\), and
\[
C(z)=P_{H_z\cap D}(z),
\quad
H_z:=
\begin{cases}
\{y\in\RR^{nm}:\ \langle y-P_W(z),\, z-P_W(z)\rangle=0\}, & z\notin W,\\[2mm]
W, & z\in W.
\end{cases}
\]
Moreover, by \cite[Lemma~5]{crmprod}, for any \(s\in W\cap D\),
\begin{equation}\label{eq:crm-fqne}
\|C(z)-s\|^2 \le \|z-s\|^2 - \|z-C(z)\|^2 .
\end{equation}

Set \(S:=W\cap D\) and define the violation (for the two-set problem \(W\cap D\))
\[
\delta(z):=\max\{\dist(z,W),\dist(z,D)\}.
\]
Since \(z\in D\), we have \(\delta(z)=\dist(z,W)\).

If \(z\in W\) then \(\delta(z)=0\) and \(C(z)=z\), so Assumption~\ref{ass:T} is trivially satisfied with $c_0=1$.

Assume \(z\notin W\).
Since \(C(z)=P_{H_z\cap D}(z)\in H_z\cap D\subseteq H_z\), we obtain
\[
\|z-C(z)\|\ \ge\ \dist(z,H_z).
\]
Using the point-to-hyperplane distance formula and the definition of \(H_z\),
\[
\dist(z,H_z)
=
\frac{|\langle z-P_W(z),\, z-P_W(z)\rangle|}{\|z-P_W(z)\|}
=
\|z-P_W(z)\|
=
\dist(z,W)
=
\delta(z).
\]
Hence \(\|z-C(z)\|\ge \delta(z)\). Substituting into \eqref{eq:crm-fqne} yields, for all \(s\in S\),
\[
\|T_{\mathrm{CRM}}(z)-s\|^2
=
\|C(z)-s\|^2
\le
\|z-s\|^2-\delta(z)^2,
\]
so Assumption~\ref{ass:T} holds with \(c_0=1\).

\section{Numerical experiments: accelerating Dykstra's algorithm}
\label{sec:numexp}

In this section, we numerically compare Dykstra's algorithm with
Halpern-type iterations of the form \eqref{halpernsecex}, where \(T\)
is taken to be one of the operators
\[
T_{\mathrm{MAP}},\quad
T_{\mathrm{Cim}},\quad
T_{\mathrm{3PM}},\quad
T_{\mathrm{A3PM}},\quad
T_{\mathrm{SCCRM}},\quad
T_{\mathrm{CRM}}.
\]
These operators are defined in Sections~\ref{secmap}, \ref{seccim},
\ref{3pm}, \ref{a3pm}, \ref{sccrm}, and \ref{crm}, respectively.

For the sequences \((x_k)\) generated with \(T=T_{\mathrm{Cim}}\) and
\(T=T_{\mathrm{A3PM}}\), we also consider parallel implementations of the
projections required to compute \(T_{\mathrm{Cim}}x_k\) and
\(T_{\mathrm{A3PM}}x_k\), respectively. In the tables below, these variants
are denoted by Cimmino~\(\parallel\) and A3PM~\(\parallel\).

The goal of all methods is to find
a point in $S$ that is the closest
to the initial anchor point $x$.
We consider two setups for $S$:
an intersection of ellipsoids and an intersection of polyhedra.
We implemented all methods in Julia
and the corresponding code is available
at {\url{https://github.com/vguigues/Halpern}}.

For the first experiment (with ellipsoids) we have $m$ ellipsoids $U_1,$ $\ldots,$ $U_m$ in $\mathbb{R}^n$, where ellipsoid $U_i$ is centered
at $y_i \in \mathbb{R}^n$ and is given by
\begin{equation}
U_i=\{y \in \mathbb{R}^n: (y-y_i)^T Q_i (y-y_i) \leq \eta_i^2\},
\end{equation}
for some positive $\eta_i$ and positive definite matrices $Q_i$.
We generate the centers
$y_i$ randomly 
and $Q_i$ of the form
$Q_i=A_i A_i^T + \lambda_i I_n$, where
matrices $A_i$ and positive $\lambda_i$ are generated randomly too. 
In that manner, matrices $Q_i$ are positive definite, as desired. We also choose $\eta_i$ sufficiently large so that the intersection of sets $U_i$ contains
the ball
$B=\{x: \|x\|_2 \leq \theta\}$ for some parameter $\theta>0$, namely
\begin{equation}\label{eqki}
\eta_i \geq (\theta+\|y_i\|_2) \sqrt{\|Q_i\|_2}
\end{equation}
for all $i$ (in particular, the intersection $S=\bigcap_{i=1}^m U_i$ is nonempty). Indeed, if \eqref{eqki} is satisfied and $x$ belongs to $B$, then
$$
(x-y_i)^T Q_i (x-y_i) \leq \|x-y_i\|_2^2 \|Q_i\|_2 \leq (\theta+\|y_i\|_2)^2 \|Q_i\|_2 \leq 
\eta_i^2
$$
and therefore $x \in U_i$. Thus \(B\subset S\), and in particular \(S\) has nonempty interior. Hence,
by the classical bounded linear regularity result for finite collections of
closed convex sets; see, e.g., \cite[Corollary~5.14]{BauschkeBorwein1996},
the family \(\{U_i\}_{i=1}^m\) is linearly regular on bounded sets.
Consequently, \eqref{eq:holder-eb} holds for these ellipsoidal instances
with exponent \(\gamma=1\).

For the polyhedral experiments, the sets are generated as
\[
U_i=\{x:Ax\leq Ax^\star+\alpha \xi_i\},
\]
where \(A\) is a given matrix, \(x^\star\) is a fixed vector, \(\alpha>0\),
and \(\xi_i\) has independent random entries in \([0,1]\). This construction
ensures that the feasible intersection is nonempty, since \(x^\star\in U_i\)
for every \(i\), and hence \(x^\star\in S:=\bigcap_{i=1}^m U_i\). For these polyhedral instances, \eqref{eq:holder-eb} follows from
Hoffman's error bound applied to the finite system of linear inequalities
defining \(S\). In particular, finite collections of polyhedra with
nonempty intersection are linearly regular on bounded sets; hence
\eqref{eq:holder-eb} holds with exponent \(\gamma=1\), corresponding to
the linear regularity case in Definition~\ref{def:holder-regularity}.
The initial points \(x_0\) are generated outside \(S\).

For completeness, we recall Dykstra's algorithm
\cite{dykstra1983algorithm,boyle1986method}, which we implemented in Julia
and use below as a benchmark. The algorithm generates sequences
\((x_i^k)\) and \((y_i^k)\) through the updates
\[
\begin{cases}
x_0^{\,k} = x_m^{\,k-1}, \\[6pt]
x_i^{\,k} = P_{U_i}\left(x_{i-1}^{\,k} - y_i^{\,k-1}\right),
    & i=1,2,\dots,m, \\[6pt]
y_i^{\,k} = x_i^{\,k} - \left(x_{i-1}^{\,k} - y_i^{\,k-1}\right),
    & i=1,2,\dots,m,
\end{cases}
\qquad k=1,2,\dots,
\]
with initializations \(x_m^{\,0}=x^0\) and \(y_i^{\,0}=0\).
It is well known \cite{hundal1997two, boyle1986method, bauschke1994dykstra} that each subsequence \(x_i^k\) converges to the best
approximation point \(P_S x^0\), where \(S=\bigcap_{i=1}^m U_i\).

The speed of convergence of Dykstra's algorithm is quite delicate.  
In full generality (for arbitrary closed convex sets), one can only guarantee sublinear decay of the error, and in fact the algorithm may converge arbitrarily slowly.  
By contrast, for polyhedral intersections, Dykstra's method enjoys linear convergence \cite{DeutschHundal1994, deutsch1995dykstra}.  
Besides polyhedral sets, Dykstra's projection method enjoys linear convergence for a
larger class of sets, including ellipsoids, under an additional strict complementarity condition; see \cite{WangPong2024}.
Recent work has also highlighted the possibility of very long ``stalling'' phases: Bauschke et al.~\cite{bauschke2020dykstra} construct a simple two-dimensional example (a line and a square) for which, by a suitable choice of starting point, the iterates remain almost stationary for an arbitrarily large number of cycles before finally converging.  

These results show that, while Dykstra's algorithm always converges to the best approximation point under mild assumptions, its practical efficiency can range from robust linear convergence in well-conditioned (e.g., polyhedral) settings to extremely slow progress in poorly conditioned configurations. This sensitivity of the convergence speed to geometric regularity motivates the development of alternative projection-type schemes with explicit rate guarantees under error-bound assumptions, such as the anchored Halpern framework studied in this paper. However, Dykstra's method remains one of the most widely used projection-based
solvers for convex feasibility and best-approximation problems. For this
reason, we include Dykstra's algorithm as a reference method in the numerical
experiments. This comparison situates the
performance of Halpern-type schemes relative to a classical algorithm that is
known to converge globally but typically exhibits slower asymptotic decay in the
absence of strong regularity.

For the ellipsoidal experiments, we consider eight instances in total. Four
instances are generated with \(\theta=1\) and
\[
(m,n)=(10,10),\ (20,10),\ (20,20),\ (20,100),
\]
and four instances are generated with \(\theta=0.01\) and the same choices of
\((m,n)\). We test the two stepsize sequences \(\alpha_k=1/k\) and
\(\alpha_k=1/\sqrt{k}\), both of which satisfy Assumption~\ref{ass:alphan}. For each
instance, the reference solution \(s^\star=P_Sx_0,\) the projection of the initial point onto \(S\), is computed using Gurobi's
quadratic solver. All methods are stopped at the first iterate \(x_k\)
satisfying
\[
\|x_k-s^\star\|\leq \varepsilon,
\qquad \varepsilon=0.01.
\]
A method is also stopped if it does not find such an iterate within
10 minutes. The number of iterations and CPU time for all methods and
instances are reported in Table~\ref{tableell1} for \(\theta=0.01\) and in
Table~\ref{tableell2} for \(\theta=1\).

We also compare the methods on the problem of finding a point in the
intersection of \(m\) polyhedra in \(\mathbb{R}^n\), each described by \(k\)
linear inequalities. The corresponding results are reported in
Tables~\ref{tableresp} and~\ref{tablesresp2}. In these experiments, for each
problem size, each method is run 10 times, and the reported CPU time is the
average CPU time over these runs.

Overall, Halpern MAP and Halpern Cimmino are the slowest methods. By
contrast, A3PM and A3PM~\(\parallel\) with \(\alpha_k=1/k\) are generally
the fastest methods in terms of CPU time. They also exhibit robust behavior
across the reported instances: they reach the prescribed tolerance in all
tests, whereas Dykstra's algorithm hits the time limit in some of the larger
ellipsoidal instances. Although Dykstra's algorithm is competitive in several
cases, A3PM and A3PM~\(\parallel\) give the best CPU times in most of the
experiments.

Previous Halpern-type accelerations based on projection-type operators had only used the classical choice \(T=T_{\mathrm{MAP}}\). The present
experiments show that other such operators, notably
\(T=T_{\mathrm{A3PM}}\), can also be effectively embedded into Halpern
iterations. Together with the theory developed in this paper, this provides
both convergence with explicit rates under H\"older error bounds and strong
computational performance. In particular, Halpern A3PM with
\(\alpha_k=1/k\) appears to be a fast and robust method for solving the best
approximation problem \eqref{BAP}.

\section{Conclusion}

The above results extend classical convergence analyses of projection methods and Halpern-type iterations to settings governed by H\"older-type regularity.
The framework captures both linear and sublinear regimes within a unified formulation.
It also accommodates operator sequences that vary across iterations, removing the need for continuity assumptions traditionally imposed in the literature.

Our numerical experiments show that Halpern-type iterations combined with some of the projection-type operators
from Section \ref{sec:examples}
are quicker than Dykstra's
algorithm to find the projection of a point
in an intersection of ellipsoids or in an intersection of polyhedra.

\begin{table} \centering
\caption{Comparison of iterations and CPU time (s) for Dykstra's algorithm and Halpern iterations driven by various operators: A3PM ($T=T_{\mathrm{A3PM}}$), MAP ($T_{\mathrm{MAP}}$), Cimmino ($T_{\mathrm{Cim}}$), SCCRM ($T_{\mathrm{SCCRM}}$), CRM ($T_{\mathrm{CRM}}$), and 3PM ($T_{\mathrm{3PM}}$). Parallel implementations are denoted by $\parallel$. All methods are run with $\varepsilon=10^{-2}$ and $\theta=0.01$. Intersection of ellipsoids.}\label{tableell1}
\vspace{5pt}
\scalebox{0.77}{
\begin{tabular}{|c|c|c|c|c|c|c|}
\hline
Method & $m$ & $n$ & Iter, $\alpha_k=1/k$ & Time, $\alpha_k=1/k$ & Iter, $\alpha_k=1/\sqrt{k}$ & Time, $\alpha_k=1/\sqrt{k}$ \\
\hline
\hline
Halpern A3PM & 10 & 10 & 440 & \textbf{0.02} & 177544 & 7.11 \\
Halpern A3PM $\parallel$ & 10 & 10 & 440 & \textbf{0.02} & 177544 & 6.69 \\
Halpern 3PM & 10 & 10 & 427 & 1.13 & 35110 & 87.65 \\
Halpern 3PM $\parallel$ & 10 & 10 & 427 & 1.35 & 35110 & 107.18 \\
Halpern MAP & 10 & 10 & 18222663 & 600 & 937026 & 600 \\
Halpern Cimmino & 10 & 10 & 16260302 & 600 & 293753 & 600 \\
Halpern Cimmino $\parallel$ & 10 & 10 & 10240611 & 600 & 283736 & 600 \\
Halpern SCCRM & 10 & 10 & 10944267 & 600 & 1605706 & 600 \\
Halpern CRM & 10 & 10 & 2512449 & 600 & 173264 & 600 \\
Dykstra & 10 & 10 & \textbf{28} & 0.06 & \textbf{28} & \textbf{0.05} \\
\hline \hline
Halpern A3PM & 20 & 10 & 482 & \textbf{0.03} & 208198 & 13.30 \\
Halpern A3PM $\parallel$ & 20 & 10 & 482 & 0.04 & 208198 & 11.13 \\
Halpern 3PM & 20 & 10 & 392 & 1.26 & 176290 & 424.96 \\
Halpern 3PM $\parallel$ & 20 & 10 & 392 & 0.91 & 176290 & 531.59 \\
Halpern MAP & 20 & 10 & 8062077 & 600 & 653562 & 600 \\
Halpern Cimmino & 20 & 10 & 6560883 & 600 & 218716 & 600 \\
Halpern Cimmino $\parallel$ & 20 & 10 & 5464958 & 600 & 270053 & 600 \\
Halpern SCCRM & 20 & 10 & 6250199 & 600 & 2430066 & 600 \\
Halpern CRM & 20 & 10 & 769149 & 600 & 137824 & 600 \\
Dykstra & 20 & 10 & \textbf{16} & 0.05 & \textbf{16} & \textbf{0.03} \\
\hline \hline
Halpern A3PM & 20 & 20 & 589 & 0.11 & 293931 & 27.36 \\
Halpern A3PM $\parallel$ & 20 & 20 & 589 & \textbf{0.08} & 293931 & 24.13 \\
Halpern 3PM & 20 & 20 & 306 & 2.14 & 41506 & 179.57 \\
Halpern 3PM $\parallel$ & 20 & 20 & 306 & 1.34 & 41506 & 234.83 \\
Halpern MAP & 20 & 20 & 7060660 & 600 & 409114 & 600 \\
Halpern Cimmino & 20 & 20 & 6509738 & 600 & 117741 & 600 \\
Halpern Cimmino $\parallel$ & 20 & 20 & 3404319 & 600 & 101044 & 600 \\
Halpern SCCRM & 20 & 20 & 7176725 & 600 & 1980609 & 600 \\
Halpern CRM & 20 & 20 & 656318 & 600 & 85185 & 600 \\
Dykstra & 20 & 20 & \textbf{22} & 0.11 & \textbf{22} & \textbf{0.12} \\
\hline \hline
Halpern A3PM & 20 & 100 & 401 & 0.47 & 138535 & 123.05 \\
Halpern A3PM $\parallel$ & 20 & 100 & 401 & \textbf{0.44} & 138535 & 109.98 \\
Halpern 3PM & 20 & 100 & \textbf{47} & 0.92 & \textbf{542} & \textbf{9.18} \\
Halpern 3PM $\parallel$ & 20 & 100 & \textbf{47} & 1.73 & \textbf{542} & 16.58 \\
Halpern MAP & 20 & 100 & 685319 & 600 & 137913 & 600 \\
Halpern Cimmino & 20 & 100 & 379666 & 600 & 38446 & 600 \\
Halpern Cimmino $\parallel$ & 20 & 100 & 156341 & 600 & 21293 & 600 \\
Halpern SCCRM & 20 & 100 & 998620 & 600 & 388228 & 600 \\
Halpern CRM & 20 & 100 & 48080 & 600 & 21883 & 600 \\
Dykstra & 20 & 100 & 32553 & 600 & 40206 & 600 \\
\hline
\end{tabular}
}
\end{table}

\begin{table} \centering
\caption{Comparison of iterations and CPU time (s) for Dykstra's algorithm and Halpern iterations driven by various operators: A3PM ($T=T_{\mathrm{A3PM}}$), MAP ($T_{\mathrm{MAP}}$), Cimmino ($T_{\mathrm{Cim}}$), SCCRM ($T_{\mathrm{SCCRM}}$), CRM ($T_{\mathrm{CRM}}$), and 3PM ($T_{\mathrm{3PM}}$). Parallel implementations are denoted by $\parallel$. All methods are run with $\varepsilon=10^{-2}$ and $\theta=1$. Intersection of ellipsoids.}
\vspace{5pt}
\scalebox{0.77}{
\begin{tabular}{|c|c|c|c|c|c|c|}
\hline
Method & $m$ & $n$ & Iter, $\alpha_k=1/k$ & Time, $\alpha_k=1/k$ & Iter, $\alpha_k=1/\sqrt{k}$ & Time, $\alpha_k=1/\sqrt{k}$ \\
\hline
Halpern A3PM & 10 & 10 & 414 & \textbf{0.02} & 154643 & 5.99 \\
Halpern A3PM $\parallel$ & 10 & 10 & 414 & 0.19 & 154643 & 6.34 \\
Halpern 3PM & 10 & 10 & 339 & 0.94 & 89954 & 210.67 \\
Halpern 3PM $\parallel$ & 10 & 10 & 339 & 0.70 & 89954 & 252.02 \\
Halpern MAP & 10 & 10 & 14671627 & 600 & 1816500 & 600 \\
Halpern Cimmino & 10 & 10 & 14953274 & 600 & 310060 & 600 \\
Halpern Cimmino $\parallel$ & 10 & 10 & 9047794 & 600 & 304512 & 600 \\
Halpern SCCRM & 10 & 10 & 9200528 & 600 & 1984184 & 600 \\
Halpern CRM & 10 & 10 & 1969758 & 600 & 181763 & 600 \\
Dykstra & 10 & 10 & \textbf{26} & 0.08 & \textbf{26} & \textbf{0.05} \\
\hline \hline
Halpern A3PM & 20 & 10 & 513 & \textbf{0.04} & 238960 & 17.03 \\
Halpern A3PM $\parallel$ & 20 & 10 & 513 & \textbf{0.04} & 238960 & 14.19 \\
Halpern 3PM & 20 & 10 & 819 & 2.85 & 211029 & 600 \\
Halpern 3PM $\parallel$ & 20 & 10 & 819 & 1.86 & 258396 & 600 \\
Halpern MAP & 20 & 10 & 7788635 & 600 & 499000 & 600 \\
Halpern Cimmino & 20 & 10 & 6884106 & 600 & 210342 & 600 \\
Halpern Cimmino $\parallel$ & 20 & 10 & 5505978 & 600 & 340995 & 600 \\
Halpern SCCRM & 20 & 10 & 7614891 & 600 & 2234219 & 600 \\
Halpern CRM & 20 & 10 & 802685 & 600 & 166890 & 600 \\
Dykstra & 20 & 10 & \textbf{16} & \textbf{0.04} & \textbf{16} & \textbf{0.04} \\
\hline \hline
Halpern A3PM & 20 & 20 & 588 & 0.12 & 277704 & 27.09 \\
Halpern A3PM $\parallel$ & 20 & 20 & 588 & \textbf{0.10} & 277704 & 25.98 \\
Halpern 3PM & 20 & 20 & 209 & 1.30 & 46970 & 210.47 \\
Halpern 3PM $\parallel$ & 20 & 20 & 209 & 1.02 & 46970 & 251.21 \\
Halpern MAP & 20 & 20 & 6156368 & 600 & 553067 & 600 \\
Halpern Cimmino & 20 & 20 & 5387737 & 600 & 103039 & 600 \\
Halpern Cimmino $\parallel$ & 20 & 20 & 4634747 & 600 & 120520 & 600 \\
Halpern SCCRM & 20 & 20 & 6065947 & 600 & 1815816 & 600 \\
Halpern CRM & 20 & 20 & 464582 & 600 & 79021 & 600 \\
Dykstra & 20 & 20 & \textbf{19} & 0.14 & \textbf{19} & \textbf{0.10} \\
\hline \hline
Halpern A3PM & 20 & 100 & 201 & \textbf{0.34} & 40743 & 42.04 \\
Halpern A3PM $\parallel$ & 20 & 100 & 201 & \textbf{0.34} & 40743 & 33.02 \\
Halpern 3PM & 20 & 100 & \textbf{16} & 0.50 & \textbf{227} & \textbf{4.49} \\
Halpern 3PM $\parallel$ & 20 & 100 & \textbf{16} & 0.50 & \textbf{227} & 7.25 \\
Halpern MAP & 20 & 100 & 441040 & 600 & 187664 & 600 \\
Halpern Cimmino & 20 & 100 & 370772 & 600 & 36008 & 600 \\
Halpern Cimmino $\parallel$ & 20 & 100 & 392097 & 600 & 22686 & 600 \\
Halpern SCCRM & 20 & 100 & 893728 & 600 & 414629 & 600 \\
Halpern CRM & 20 & 100 & 52197 & 600 & 28291 & 600 \\
Dykstra & 20 & 100 & 26699 & 600 & 40965 & 600 \\
\hline
\end{tabular}
}
\label{tableell2}
\end{table}

\begin{table} \centering
\caption{Comparison of iterations and CPU time (s) for Dykstra's algorithm and Halpern iterations driven by various operators: A3PM ($T=T_{\mathrm{A3PM}}$), MAP ($T_{\mathrm{MAP}}$), Cimmino ($T_{\mathrm{Cim}}$), SCCRM ($T_{\mathrm{SCCRM}}$), CRM ($T_{\mathrm{CRM}}$), and 3PM ($T_{\mathrm{3PM}}$). Parallel implementations are denoted by $\parallel$. Parameters: $\varepsilon = 10^{-2}$, $\alpha_k=1/k$. Intersection of polyhedra.}
\vspace{5pt}
\scalebox{0.99}{\begin{tabular}{|c|c|c|c|c|c|}
\hline
Method & $m$ & $n$ & $k$ & Iter & Time (s) \\
\hline
Halpern A3PM & 10 & 10 & 20 & 3948 & 0.49 \\
Halpern A3PM $\parallel$ & 10 & 10 & 20 & 3948 & \textbf{0.47} \\
Halpern 3PM & 10 & 10 & 20 & 840 & 6.01 \\
Halpern 3PM $\parallel$ & 10 & 10 & 20 & 840 & 2.74 \\
Halpern MAP & 10 & 10 & 20 & 805 & 3.82 \\
Halpern Cimmino & 10 & 10 & 20 & 13806 & 75.60 \\
Halpern Cimmino $\parallel$ & 10 & 10 & 20 & 13806 & 26.91 \\
Halpern SCCRM & 10 & 10 & 20 & 3757 & 4.33 \\
Halpern CRM & 10 & 10 & 20 & 26187 & 259.45 \\
Dykstra & 10 & 10 & 20 & \textbf{162} & 1.57 \\
\hline
\hline
Halpern A3PM & 10 & 10 & 10 & 4552 & \textbf{0.12} \\
Halpern A3PM $\parallel$ & 10 & 10 & 10 & 4552 & 0.19 \\
Halpern 3PM & 10 & 10 & 10 & 698 & 3.65 \\
Halpern 3PM $\parallel$ & 10 & 10 & 10 & 698 & 1.47 \\
Halpern MAP & 10 & 10 & 10 & 641 & 2.77 \\
Halpern Cimmino & 10 & 10 & 10 & 5677 & 22.04 \\
Halpern Cimmino $\parallel$ & 10 & 10 & 10 & 5677 & 6.63 \\
Halpern SCCRM & 10 & 10 & 10 & 2128 & 3.97 \\
Halpern CRM & 10 & 10 & 10 & 22776 & 120.39 \\
Dykstra & 10 & 10 & 10 & \textbf{175} & 0.80 \\
\hline
\hline
Halpern A3PM & 10 & 10 & 5 & 7566 & \textbf{0.11} \\
Halpern A3PM $\parallel$ & 10 & 10 & 5 & 7566 & 0.22 \\
Halpern 3PM & 10 & 10 & 5 & 804 & 2.12 \\
Halpern 3PM $\parallel$ & 10 & 10 & 5 & 804 & 1.41 \\
Halpern MAP & 10 & 10 & 5 & 949 & 1.46 \\
Halpern Cimmino & 10 & 10 & 5 & 4670 & 10.11 \\
Halpern Cimmino $\parallel$ & 10 & 10 & 5 & 4670 & 4.72 \\
Halpern SCCRM & 10 & 10 & 5 & 1155 & 2.31 \\
Halpern CRM & 10 & 10 & 5 & 2886 & 10.75 \\
Dykstra & 10 & 10 & 5 & \textbf{133} & 0.30 \\
\hline
\hline
Halpern A3PM & 10 & 10 & 3 & 495411 & 6.09 \\
Halpern A3PM $\parallel$ & 10 & 10 & 3 & 495411 & 13.54 \\
Halpern 3PM & 10 & 10 & 3 & 10739 & 20.54 \\
Halpern 3PM $\parallel$ & 10 & 10 & 3 & 10739 & 18.12 \\
Halpern MAP & 10 & 10 & 3 & 12645 & 18.98 \\
Halpern Cimmino & 10 & 10 & 3 & 81347 & 119.06 \\
Halpern Cimmino $\parallel$ & 10 & 10 & 3 & 81347 & 94.26 \\
Halpern SCCRM & 10 & 10 & 3 & 5578 & 14.25 \\
Halpern CRM & 10 & 10 & 3 & 29541 & 76.71 \\
Dykstra & 10 & 10 & 3 & \textbf{499} & \textbf{0.73} \\
\hline
\end{tabular}
}
\label{tableresp}
\end{table}

\begin{table} \centering
\caption{Comparison of iterations and CPU time (s) for Dykstra's algorithm and Halpern iterations driven by various operators: A3PM ($T=T_{\mathrm{A3PM}}$), MAP ($T_{\mathrm{MAP}}$), Cimmino ($T_{\mathrm{Cim}}$), SCCRM ($T_{\mathrm{SCCRM}}$), CRM ($T_{\mathrm{CRM}}$), and 3PM ($T_{\mathrm{3PM}}$). Parallel implementations are denoted by $\parallel$. Parameters: $\varepsilon = 10^{-2}$, $\alpha_k=1/k$. Intersection of polyhedra.}
\vspace{5pt}
\begin{tabular}{|c|c|c|c|c|c|}
\hline
Method & $m$ & $n$ & $k$ & Iter & Time (s) \\
\hline
Halpern A3PM & 20 & 20 & 20 & 8411 & \textbf{0.71} \\
Halpern A3PM $\parallel$ & 20 & 20 & 20 & 8411 & 0.78 \\
Halpern 3PM & 20 & 20 & 20 & 1164 & 20.22 \\
Halpern 3PM $\parallel$ & 20 & 20 & 20 & 1164 & 6.28 \\
Halpern MAP & 20 & 20 & 20 & 1147 & 16.19 \\
Halpern Cimmino & 20 & 20 & 20 & 20554 & 314.66 \\
Halpern Cimmino $\parallel$ & 20 & 20 & 20 & 20554 & 89.37 \\
Halpern SCCRM & 20 & 20 & 20 & 5948 & 23.36 \\
Halpern CRM & 20 & 20 & 20 & 21173 & 600 \\
Dykstra & 20 & 20 & 20 & \textbf{220} & 3.65 \\
\hline
\hline
Halpern A3PM & 100 & 100 & 20 & 119206 & 103.93 \\
Halpern A3PM $\parallel$ & 100 & 100 & 20 & 119206 & 96.68 \\
Halpern 3PM & 100 & 100 & 20 & 2625 & 600 \\
Halpern 3PM $\parallel$ & 100 & 100 & 20 & 3425 & 296.96 \\
Halpern MAP & 100 & 100 & 20 & 2709 & 545.56 \\
Halpern Cimmino & 100 & 100 & 20 & 2984 & 600 \\
Halpern Cimmino $\parallel$ & 100 & 100 & 20 & 7781 & 600 \\
Halpern SCCRM & 100 & 100 & 20 & 11957 & 600 \\
Halpern CRM & 100 & 100 & 20 & 1475 & 600 \\
Dykstra & 100 & 100 & 20 & \textbf{383} & \textbf{75.35} \\
\hline
\end{tabular}
\label{tablesresp2}
\end{table}

\providecommand{\bysame}{\leavevmode\hbox to3em{\hrulefill}\thinspace}
\providecommand{\MR}{\relax\ifhmode\unskip\space\fi MR }
\providecommand{\MRhref}[2]{%
  \href{http://www.ams.org/mathscinet-getitem?mr=#1}{#2}
}
\providecommand{\href}[2]{#2}


\begin{thebibliography}{10}

\bibitem{bach2017efficient}
F.~Bach, \emph{Efficient algorithms for non-convex isotonic regression through submodular optimization}, Advances in Neural Information Processing Systems ({NIPS}) 30, 2017, pp.~1358--1367.

\bibitem{Barros3pm}
P.~Barros, R.~Behling, and V.~Guigues, \emph{Parallel polyhedral projection method for the convex feasibility problem}, 2025.

\bibitem{barros2025parallelizing}
P.~Barros, R.~Behling, V.~Guigues, and L.-R. Santos, \emph{Parallelizing the circumcentered reflection method}, 2025.

\bibitem{bauschke1994dykstra}
H.~Bauschke and J.~Borwein, \emph{{Dykstra}'s alternating projection algorithm for two sets}, Journal of Approximation Theory \textbf{79} (1994), 418--443.

\bibitem{BauschkeBorwein1993}
H.~Bauschke and J.~M. Borwein, \emph{On the convergence of {von Neumann}'s alternating projection algorithm for two sets}, Set-Valued Analysis \textbf{1} (1993), no.~2, 185--212.

\bibitem{BauschkeBorwein1996}
\bysame, \emph{On projection algorithms for solving convex feasibility problems}, SIAM Review \textbf{38} (1996), no.~3, 367--426.

\bibitem{Bauschke2017}
H.~Bauschke and P.~L. Combettes, \emph{Convex analysis and monotone operator theory in {Hilbert} spaces}, 2nd ed., Springer, Cham, 2017.

\bibitem{bauschkeKoch2015projection}
H.~Bauschke and V.~R. Koch, \emph{Projection methods: Swiss army knives for solving feasibility and best approximation problems with halfspaces}, Infinite Products of Operators and Their Applications, Contemporary Mathematics, vol. 636, American Mathematical Society, 2015, pp.~1--40.

\bibitem{bauschke2020dykstra}
H.~Bauschke, W.~Li, and V.~Le, \emph{{On {Dykstra}'s algorithm: finite convergence and stalling, and the method of alternating projections}}, Optimization Letters \textbf{14} (2020), no.~8, 1975--1987.

\bibitem{BauschkeOuyangWang2018}
H.~Bauschke, H.~Ouyang, and X.~Wang, \emph{On circumcenters of finite sets in {Hilbert} spaces}, Linear and Nonlinear Analysis \textbf{4} (2018), 271--295.

\bibitem{crm1}
R.~Behling, J.~Y. Bello-Cruz, and L.-R. Santos, \emph{Circumcentering the {Douglas--Rachford} method}, Numerical Algorithms \textbf{78} (2018), no.~3, 759--776.

\bibitem{succccrm}
R.~Behling, Y.~Bello-Cruz, A.~Iusem, D.~Liu, and L.-R. Santos, \emph{A successive centralized circumcentered-reflection method for the convex feasibility problem}, Computational Optimization and Applications \textbf{87} (2024), 83--116.

\bibitem{BehlingBelloCruzIusemSantos2024}
R.~Behling, Y.~Bello-Cruz, A.~N. Iusem, and L.-R. Santos, \emph{On the centralization of the circumcentered reflection method}, Mathematical Programming \textbf{205} (2024), 337--371.

\bibitem{crmprod}
R.~Behling, Y.~Bello-Cruz, and L.-R. Santos, \emph{On the circumcentered-reflection method for the convex feasibility problem}, Numerical Algorithms \textbf{86} (2021), 1475--1494.

\bibitem{borweinLiYao2014cyclic}
J.~Borwein, G.~Li, and L.~Yao, \emph{Analysis of the convergence rate for the cyclic projection algorithm applied to basic semi-algebraic convex sets}, SIAM Journal on Optimization \textbf{24} (2014), no.~1, 498--527.

\bibitem{boyle1986method}
J.~Boyle and R.~{Dykstra}, \emph{A method for finding projections onto the intersection of convex sets in {Hilbert} spaces}, Advances in Order Restricted Statistical Inference ({Dykstra}, Richard L., Robertson, Tim, Wright, and F.~T., eds.), Lecture Notes in Statistics, vol.~37, Springer, Berlin, 1986, pp.~28--47.

\bibitem{Cimmino1938}
G.~{Cimmino}, \emph{Calcolo approssimato per le soluzioni dei sistemi di equazioni lineari}, Ricerca Scientifica \textbf{9} (1938), no.~II, 326--333.

\bibitem{Combettes1996}
P.~L. Combettes, \emph{The convex feasibility problem in image recovery}, Advances in Imaging and Electron Physics (P.~W. Hawkes, ed.), vol.~95, Elsevier, San Diego, 1996, pp.~155--270.

\bibitem{deutsch1995dykstra}
F.~Deutsch, \emph{{Dykstra}'s cyclic projections algorithm: The rate of convergence}, Approximation Theory and Wavelets and Applications (S.~Singh, ed.), NATO Science Series, vol. 454, Springer, Dordrecht, 1995, pp.~87--94.

\bibitem{DeutschHundal1994}
F.~Deutsch and H.~Hundal, \emph{The rate of convergence of {Dykstra}'s cyclic projections algorithm: The polyhedral case}, Numerical Functional Analysis and Optimization \textbf{15} (1994), no.~5-6, 537--565.

\bibitem{drusvyatskiyLiWolkowicz2017illposedSDP}
D.~Drusvyatskiy, G.~Li, and H.~Wolkowicz, \emph{A note on alternating projections for ill-posed semidefinite feasibility problems}, Mathematical Programming \textbf{162} (2017), no.~1--2, 537--548.

\bibitem{dykstra1983algorithm}
R.~{Dykstra}, \emph{An algorithm for restricted least squares regression}, Journal of the American Statistical Association \textbf{78} (1983), 837--842.

\bibitem{halperin1962product}
I.~Halperin, \emph{The product of projection operators}, Acta Scientiarum Mathematicarum (Szeged) \textbf{23} (1962), 96--99.

\bibitem{Halpern1967}
B.~{Halpern}, \emph{Fixed points of nonexpanding maps}, Bulletin of the American Mathematical Society \textbf{73} (1967), 957--961.

\bibitem{he2024convergence}
S.~He, H.~Xu, Q.~Dong, and N.~Mei, \emph{Convergence analysis of the {Halpern} iteration with adaptive anchoring parameters}, Mathematics of Computation \textbf{93} (2024), no.~345, 327--345.

\bibitem{higham2002computing}
N.~Higham, \emph{Computing the nearest correlation matrix --- a problem from finance}, IMA Journal of Numerical Analysis \textbf{22} (2002), no.~3, 329--343.

\bibitem{hundal1997two}
H.~Hundal and F.~Deutsch, \emph{Two generalizations of {Dykstra}'s cyclic projections algorithm}, Mathematical Programming \textbf{77} (1997), no.~3, 335--355.

\bibitem{khatibzadeh2020unification}
H.~Khatibzadeh and H.~Pouladi, \emph{Unification of some iterative and proximal like methods for asymptotically nonexpansive and quasi-nonexpansive mappings}, {Banach} Journal of Mathematical Analysis \textbf{14} (2020), no.~4, 1326--1346.

\bibitem{khatibzadehRanjbar2015halpern}
H.~Khatibzadeh and S.~Ranjbar, \emph{{Halpern} type iterations for strongly quasi-nonexpansive sequences and its applications}, Taiwanese Journal of Mathematics \textbf{19} (2015), no.~5, 1561--1576.

\bibitem{LewisLukeMalick2009}
A.~S. Lewis, D.~R. Luke, and J.-B. Malick, \emph{Local linear convergence for alternating and averaged nonconvex projections}, Foundations of Computational Mathematics \textbf{9} (2009), no.~4, 485--513.

\bibitem{lieder2021halpern}
F.~Lieder, \emph{On the convergence rate of the {H}alpern iteration}, Optimization Letters \textbf{15} (2021), no.~2, 405--418.

\bibitem{LinXu2022Holder}
Y.~Lin and Y.~Xu, \emph{Convergence rate analysis for fixed-point iterations of generalized averaged nonexpansive operators}, Journal of Fixed Point Theory and Applications \textbf{24} (2022), Article 61.

\bibitem{lions1977approximation}
P.~Lions, \emph{Approximation de points fixes de contractions}, Comptes rendus de l'Acad\'emie des Sciences S\'erie A-B \textbf{284} (1977), no.~21, A1357--A1359.

\bibitem{Luke2013}
D.~R. Luke, \emph{Local linear convergence of approximate projections onto regularized sets}, SIAM Journal on Optimization \textbf{23} (2013), no.~2, 643--661.

\bibitem{LukeThao2018}
D.~R. Luke and N.~H. Thao, \emph{Convergence of linearly regularized alternating projections}, Journal of Optimization Theory and Applications \textbf{176} (2018), 498--517.

\bibitem{ma2020universal}
Z.~Ma, Z.~Ma, and H.~Yuan, \emph{Universal latent space model fitting for large networks with edge covariates}, Journal of Machine Learning Research \textbf{21} (2020), 1--55.

\bibitem{nesterov2018lectures}
Y.~{Nesterov}, \emph{Lectures on convex optimization}, 2 ed., Springer Optimization and Its Applications, vol. 137, Springer, Cham, 2018.

\bibitem{reich1994approximating}
S.~Reich, \emph{Approximating fixed points of nonexpansive mappings}, Panamerican Mathematical Journal \textbf{4} (1994), no.~2, 23--28.

\bibitem{SAEJUNG20103431}
S.~Saejung, \emph{{Halpern}'s iteration in {Banach} spaces}, Nonlinear Analysis: Theory and Methods \& Applications \textbf{73} (2010), no.~10, 3431--3439.

\bibitem{Shehu2022Inertial}
Y.~Shehu, Q.~Dong, Z.~Hu, and J.~Yao, \emph{Relaxed inertial fixed point method for infinite family of averaged quasi-nonexpansive mappings with applications to sparse signal recovery}, Soft Computing \textbf{26} (2022), no.~1, 2021--2032.

\bibitem{sidky2008image}
E.~Sidky and X.~Pan, \emph{Image reconstruction in circular cone-beam computed tomography by constrained and total-variation minimization}, Physics in Medicine \& Biology \textbf{53} (2008), no.~17, 4777.

\bibitem{tibshirani2017dykstras}
R.~Tibshirani, \emph{{Dykstra}'s algorithm and {ADMM}, and coordinate descent: Connections and insights, and extensions}, Advances in Neural Information Processing Systems ({NIPS}) 30, 2017, pp.~6655--6664.

\bibitem{TranDinh2023HalpernNesterov}
Q.~Tran-Dinh, \emph{From {Halpern}'s fixed-point iterations to {Nesterov}'s accelerated interpretations for root-finding problems}, Computational Optimization and Applications \textbf{87} (2024), 181--218.

\bibitem{vonneumann1950functional}
J.~{von Neumann}, \emph{Functional operators and vol. ii: The geometry of orthogonal spaces}, Annals of Mathematics Studies, vol.~22, Princeton University Press, Princeton and NJ, 1950, Reprint of mimeographed lecture notes first distributed in 1933.

\bibitem{wang2026improvement}
F.~Wang and H.~Pham, \emph{Improvement of convergence criteria for finding common fixed points of multiple finite demicontractive mappings}, Optimization Letters \textbf{20} (2026), 53--65.

\bibitem{WangPong2024}
X.~Wang and T.~K. Pong, \emph{Convergence rate analysis of a dykstra-type projection algorithm}, SIAM Journal on Optimization \textbf{34} (2024), no.~1, 563--589.

\bibitem{wittmann1992approximation}
R.~Wittmann, \emph{Approximation of fixed points of nonexpansive mappings}, Arch. Math. (Basel) \textbf{58} (1992), no.~5, 486--491.

\bibitem{xiang2020efficient}
S.~Xiang, X.~Shen, and J.~Ye, \emph{Efficient sparse group feature selection via nonconvex optimization}, Journal of Machine Learning Research \textbf{21} (2020), 1--67, Submitted 8/17; Revised 7/19; Published 2/20.

\bibitem{xu2002iterative}
H.~Xu, \emph{Iterative algorithms for nonlinear operators}, Journal of the London Mathematical Society \textbf{66} (2002), no.~1, 240--256.

\bibitem{YuWang2023Halpern}
H.~Yu and F.~Wang, \emph{Generalized {Halpern} iteration with new control conditions and its application}, Journal of Fixed Point Theory and Applications \textbf{25} (2023), Article 45.

\end{thebibliography}
\end{document}